\newtheorem{thm}{Theorem}[section]
\newtheorem{lem}[thm]{Lemma}
\newtheorem{cor}[thm]{Corollary}
\newtheorem{prop}[thm]{Proposition}
\theoremstyle{definition}
\newtheorem{rem}[thm]{Remark}
\newtheorem{defn}[thm]{Definition}
\newtheorem{ex}[thm]{Example}
\newtheorem{notation}[thm]{Notation}
\newtheorem*{acknowledgments*}{Acknowledgments}
\numberwithin{equation}{section}
\mathchardef\ordinarycolon\mathcode`\: 
\def\vcentcolon{\mathrel{\mathop\ordinarycolon}} 
\providecommand*\coloneqq{\mathrel{\vcentcolon\mkern-1.2mu}=}
\mathchardef\mhyphen="2D
\def\Z{{\mathbb Z}} 
\def\Co{{\mathbb C}} 
\DeclareMathOperator*\colim{colim}
\newcommand\cat[1]{\mathbf{#1}}
\newcommand\Ho[1]{\cat{Ho}(#1)}
\newcommand\SW[1]{\cat{SHo}(#1)}
\def\CH{{\cat{CH}}}
\def\CM{{\cat{CM}}}
\def\Calg{{\cat{C^{*}}}}
\def\Csep{{\cat{sC^*}}}
\def\Aalg{{\cat{A^{*}}}}
\def\Top{{\cat{Top}}}
\newcommand\Mor[1]{{\mathrm{Mor}_{#1}}}
\def\inn{\in}
\def\ev{\mathrm{ev}}
\def\we{\overset{\sim}{\rightarrow}}
\def\fib{\twoheadrightarrow}
\def\afib{\overset{\sim}{\twoheadrightarrow}}
\def\C{\cat{C}}
\def\W{\cat{W}}
\def\F{\cat{F}}
\def\A{\cat{A}}
\def\D{\cat{D}}
\def\H{\mathcal H}
\def\bu{\cat{bu}}
\def\S{S}
\def\R{{\cat{R}}}
\def\P{{\cat{P}}}
\def\Ab{\cat{Ab}}
\def\KK{\cat{K\!K}}
\def\K{\cat{K}}
\def\M{\cat{M}}
\DeclareMathOperator\Hom{Hom}
\DeclareMathOperator\Ext{Ext}
\def\id{\mathrm{id}}
\def\pt{\ast}
\def\cast{$C^{*}$}
\newcommand\Cone[1]{{F}#1}
\newcommand\Cyl[1]{{N}#1}
\newcommand\MorHo[2]{[#2]_{#1}}
\def\tto{\rightrightarrows}
\def\Compact{\mathcal K}
\begin{document}

\title{Homotopical Algebra for \cast-algebras}
\author{Otgonbayar Uuye}
\date{\today}
\address{
School of Mathematics\\
Cardiff University\\
Senghennydd Road\\
CARDIFF, Wales, UK.\\
CF24 4AG}
\email{UuyeO@cardiff.ac.uk}

\begin{abstract}
Category of fibrant objects is a convenient framework to do homotopy theory, introduced and developed by Ken Brown. In this paper, we apply it to the category of \cast-algebras. In particular, we get a unified treatment of (ordinary) homotopy theory for \cast-algebras, $K\!K$-theory and $E$-theory,  as all of these can be expressed as the homotopy theory of a category of fibrant objects.
\end{abstract}

\keywords{category of fibrant objects, abstract homotopy theory, \cast-algebras, $K\!K$-theory, $K$-theory}
\subjclass[2010]{Primary (46L85); Secondary (55U35)}

\maketitle
\tableofcontents

\setcounter{section}{-1}
\section{Introduction}

Basic homotopy theory for \cast-algebras can be developed in an analogous way to the homotopy theory for topological spaces, using the Gelfand-Naimark duality between pointed compact Hausdorff spaces and abelian \cast-algebras. This is carried out, for example, by Rosenberg in \cite{MR658514} and Schochet in \cite{MR757510}. Thus, for instance, we have a version of the Puppe exact sequence, with essentially the same proof (cf.\ \cite[Proposition 2.6]{MR757510}).

There is one big difference: the homotopy theory for \cast-algebras
does not admit a Quillen model category structure, as first pointed
out by Andersen-Grodal (see Appendix A). This is unfortunate, since model categories provide a standard and powerful framework to study various aspects of homotopy theories. However, it turns out that not everything is lost: the category of \cast-algebras behave as if it was the subcategory of the fibrant objects in a model category, and this is enough for many purposes, because many proofs in model category theory start by reducing to the case of (co)fibrant objects. 

The notion of a ``category of fibrant objects'' is abstracted and developed by Brown in \cite{MR0341469}.  In this paper, we apply Brown's theory to the category of \cast-algebras. In Section \ref{sec AHT}, we review some basic facts about abstract homotopy theory in the setting of category of fibrant objects. 

In Section \ref{sec cast}, we first apply the abstract theory of Section \ref{sec AHT}  to the ordinary homotopy theory for \cast-algebras (this essentially recovers \cite{MR757510}). We also show that the Meyer-Nest's UCT category (cf.\ \cite{MR2193334}), Kasparov's $K\!K$-theory (cf.\ \cite{MR582160,MR918241}), and Connes-Higson's $E$-theory (cf.\ \cite{MR1068250,MR1065438}) can be described as the homotopy category of a category of fibrant objects. As a corollary, we get a unified treatment of the triangulated structures on these categories.

In addition to ordinary homotopy theory, we also have shape theories  for (separable) \cast-algebras (cf.\ \cite{MR866493, MR813640}). In \cite{MR1262931}, D{\u{a}}d{\u{a}}rlat constructed the {\em strong} shape category and showed that it is equivalent to the asymptotic homotopy category of separable \cast-algebras of Connes-Higson (cf.\ \cite{MR1065438}). 

Unfortunately and unlike the commutative case (cf.\ \cite{MR643532, MR601680}), we do not (yet) have a category of fibrant objects whose homotopy category describes the strong shape category. However, as we show in subsection \ref{subsec E}, the suspension-stable version considered by Thom (cf.\ \cite{andreasthomE}) does arise as the stable homotopy category of a category of fibrant objects. We also show that Thom's connective $K$-theory category fits well in this framework (cf.\ loc.cit.). 

Needless to say, Brown's theory of category of fibrant objects is not the only way to approach the homotopy theory for \cast-algebras. The main ``reason'' for the failure for the existence of a model structure on the category of \cast-algebras is that the category is too small, so an alternative approach would be to enlarge the category of \cast-algebras. Joachim-Johnson produced a model category structure for $K\!K$-theory by enlarging the category of \cast-algebras to a suitable category of topological algebras (cf.\ \cite{MR2222510}). {\O}stv{\ae}r developed a homotopy theory by enlarging the category of \cast-algebras to the category of \cast-spaces (cf.\ \cite{0812.0154}). Cuntz described an alternative construction of bivariant $K$-theories in \cite{MR1667652}.

We also note that Voigt computed the $K$-theory of free orthogonal quantum groups in \cite{Voigt20111873} using Meyer-Nest's triangulated category approach to the Baum-Connes conjecture (cf.\ \cite{MR2193334}). This seems to be the first concrete results in the theory of operator algebras, which can be proved only using abstract homotopy theoretic methods. 

Applications of the framework developed in this paper will appear elsewhere. 

\begin{acknowledgments*} This research was supported by the Danish National Research Foundation (DNRF) through the Centre for Symmetry and Deformation at the University of Copenhagen. I thank the referee for many useful suggestions and Ilan Barnea for pointing out a mistake in an earlier version.
\end{acknowledgments*}

\section{Abstract Homotopy Theory}\label{sec AHT}
For the convenience of the reader we recall some basic notions and results from abstract homotopy theory. See \cite{MR0223432, MR0341469, MR0224090, MR1464944, MR1711612} for details.

\subsection{Categories of Fibrant Objects}
The following is our main definition.
\begin{defn}[{Brown \cite{MR0341469}}]\label{defn cat Brown} 
Let $\C$ be category with terminal object $\pt$ and let $\F \subseteq \C$ and $\W \subseteq \C$ be distinguished subcategories. We say that $\C$ is a {\em category of fibrant objects} if the following conditions (F0) - (FW2) hold.

\begin{itemize}
\item[(F0)] The class $\F$ is closed under composition.
\item[(F1)] Isomorphisms of $\C$ are in $\F$.
\item[(F2)] The pullback in $\C$ of a morphism in $\F$ exists and is in $\F$.
\item[(F3)] For any object $B$ of $\C$, the morphism $B \to \pt$ is in $\F$.
\end{itemize}
Morphisms of $\F$ are called {\em fibrations} and denoted $\fib$.

\begin{itemize}
\item[(W1)] Isomorphisms of $\C$ are in $\W$.
\item[(W2)] If two of $f, g$ and $gf$ are in $\W$, then so is the third. 
\end{itemize}
Morphisms of $\W$ are called {\em weak equivalences} and denoted $\we$.

\begin{itemize}
\item[(FW1)] The pullback in $\C$ of a morphism in $\W \cap \F$ is in $\W \cap \F$.
\end{itemize}
Morphisms of $\W \cap \F$ are called {\em trivial fibrations} and denoted $\afib$.
\begin{itemize} 
\item[(FW2)] For any object $B$ of $\C$, the diagonal map $B \to B \times B$ admits a factorization 
	\begin{equation}\label{path-object decomposition}
	\xymatrix{B \ar[r]^{\sim}_-{s} & B^{I} \ar@{->>}[r]_-{d} & B \times B},
	\end{equation}
where $s \in \W$ is a weak equivalence, $d = (d_{0}, d_{1}) \in \F$ is a fibration.
\end{itemize}
The object $B^{I}$ or more precisely the quadruple $(B^{I}, s, d_0, d_1)$ is called a {\em path-object} of $B$.
\end{defn}

If there is no risk for confusion, we simply say that $\C$ is a category of fibrant objects. If the terminal object is also an {\em initial} object, we say that $\C$ is a {\em pointed} category of fibrant objects. 

\begin{rem} 
\begin{enumerate} 
\item The condition (F0) is superfluous since $\F$ is assumed to be a subcategory. But it is convenient to have a notation for this property.
\item The conditions (F1) and (W1) imply that $\F$ and $\W$ contain all objects of $\C$.
\item The conditions (F2) and (F3) imply that $\C$ is has finite products.
\end{enumerate}
\end{rem}

\begin{rem} Dually there is a notion of a {\em category of cofibrant objects}.
\end{rem}

The following is the motivating example.
\begin{ex}\label{ex model} For any model category $\M$,  the full subcategory $\M_{f}$ consisting of the fibrant objects in $\M$ is naturally a category of fibrant objects, by restricting the weak equivalences and the fibrations to $\M_{f}$. 

In particular, if $\Top$ denote the category of compactly generated weakly Hausdorff topological spaces and continuous maps, then
\begin{enumerate}
\item $\Top$, homotopy equivalences, Hurewicz fibrations;
\item $\Top$, weak homotopy equivalences, Serre fibrations;
\end{enumerate}
are examples of categories of fibrant objects. In this paper, we only consider the latter one.

A more algebraic example is the following:	let $R$ be a ring and let $\cat{Ch}(R)$ denote the category of chain complexes of left $R$-modules and chain maps. Then
\begin{enumerate}
\item[(3)] $\cat{Ch}(R)$, quasi-isomorphisms, degreewise epimorphisms
\end{enumerate}
is a category of fibrant objects. In these three examples, all objects are fibrant i.e.
 $\M_{f} = \M$.
\end{ex}

\begin{defn} A functor between categories of fibrant objects is said to be {\em exact} if it preserves all the relevant structure: it sends the terminal object to the terminal object, fibrations to fibrations, weak equivalences to weak equivalences and pullbacks (of fibrations) to pullbacks.
\end{defn}

\begin{ex}\label{ex ref subcat} Let $\C$ be a category of fibrant objects and let $\A \subseteq \C$ be a full {\em reflective} subcategory i.e.\ the inclusion $i\colon \A \to \C$ is a right-adjoint. Suppose that for any $B \inn \A$, a path-object $B^{I}$ can be chosen to lie in $\A$. Then $\A$ is a category of fibrant objects by restricting weak equivalences and fibrations, since limits in $\A$ can be computed in $\C$; and the inclusion $i\colon \A \to \C$ is exact.
\end{ex}

Occasionally, we find it convenient to isolate the notions of weak equivalences and fibrations.
\begin{defn}
Let $\C$ be a category. A {\em subcategory of weak equivalences} is a  subcategory $\W \subseteq \C$ satisfying (W1) and (W2).
If $\C$ has a terminal object, a {\em subcategory of fibrations} is a subcategory $\F \subseteq \C$ satisfying (F0) - (F3).
\end{defn}

\subsection{Fibre and Homotopy Fibre}
\begin{lem}[Factorization Lemma]\label{lem factorization} Let $f\colon A \to B$ be a morphism in a category of fibrant objects. Consider the diagram 
	\begin{equation}
	\xymatrix{
	\Cyl{f} \ar[r]^p \ar@{=}[d] & B\\
	\Cyl{f} \ar[r]^{d_{0}^{*}(f)} \ar[d] & B^{I} \ar[d]^{d_{0}} \ar[u]^{d_{1}} \\
	A \ar[r]^{f} \ar@/^/[u]^{i} & B \ar@/^/[u]^{s} 
	},
	\end{equation}
where $(B^{I}, s, d_0, d_1)$ is a path-object for $B$ and $\Cyl{f}$ is the pullback $A \times_{B}B^{I}$ and $p$ is the composition $d_{1} \circ d_{0}^{*}(f)$ and $i$ is the map determined by the section $s$.

Then $p$ is a fibration and $i$ is a right inverse to a trivial fibration (in particular, a weak equivalence) and $f = p\circ i$.
\end{lem}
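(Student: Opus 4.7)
My plan is to unwind the construction of $\Cyl{f}$ as a pullback and then apply the axioms (F0)--(FW1) directly. First I would name the missing projection $\pi_{A} \colon \Cyl{f} \to A$ (so that $f \circ \pi_{A} = d_{0} \circ d_{0}^{*}(f)$), and observe that the pair $(\id_{A}, s \circ f) \colon A \to A \times B^{I}$ is compatible via $d_{0} \circ s = \id_{B}$ and hence factors uniquely through $\Cyl{f}$; this factorization is the map $i$, and by construction $\pi_{A} \circ i = \id_{A}$ and $d_{0}^{*}(f) \circ i = s \circ f$. The identity $f = p \circ i$ then follows immediately from $p \circ i = d_{1} \circ d_{0}^{*}(f) \circ i = d_{1} \circ s \circ f = f$, using the path-object identity $d_{1} \circ s = \id_{B}$.

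Next I would show that $p$ is a fibration. The key observation is to reexpress $\Cyl{f}$ as a different pullback: one can check that $\Cyl{f}$ is canonically isomorphic to the pullback of the fibration $d = (d_{0}, d_{1}) \colon B^{I} \to B \times B$ along $f \times \id_{B} \colon A \times B \to B \times B$. Under this identification, the combined map $q = (\pi_{A}, p) \colon \Cyl{f} \to A \times B$ is exactly the pullback of $d$, so $q$ is a fibration by (F2). The projection $\pi_{B} \colon A \times B \to B$ is itself a fibration, being the pullback of $A \to \pt$ along $B \to \pt$ (combining (F2) and (F3)). Composing, $p = \pi_{B} \circ q$ is a fibration by (F0).

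Finally, for the claim on $i$, I would exhibit a trivial fibration of which $i$ is a right inverse: namely $\pi_{A}$ itself. The map $d_{0}$ equals $\pi_{1} \circ d$, so it is a fibration (using (F2) and (F3) to get $\pi_{1}$); and $d_{0} \circ s = \id_{B}$ together with $s \in \W$ and (W1), (W2) forces $d_{0} \in \W$, so $d_{0}$ is a trivial fibration. By (FW1) its pullback $\pi_{A} = f^{*}(d_{0})$ is again a trivial fibration, and since $\pi_{A} \circ i = \id_{A}$ the map $i$ is indeed a right inverse to it. A further application of (W2) yields $i \in \W$ as a bonus consequence, as stated in the parenthetical of the lemma.

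The only genuinely non-routine step in this plan is the ``pasting of pullbacks'' identification of $\Cyl{f}$ as a pullback along $f \times \id_{B}$ used in the second paragraph; once that observation is in place, everything else is direct bookkeeping with the axioms. I expect that to be the main obstacle, since it requires recognizing the same object through two different universal properties rather than manipulating the defining one.
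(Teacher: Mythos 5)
Your proof is correct and is essentially the argument the paper defers to: the paper proves this lemma only by citing Brown's original Factorization Lemma, and your reconstruction (the section $i$ from the universal property, $p$ as a fibration via the pasting identification $\Cyl{f} \cong (A\times B)\times_{B\times B}B^{I}$ followed by the projection, and $i$ as a section of the trivial fibration $f^{*}(d_{0})$) is exactly Brown's proof. No gaps; the ``pasting of pullbacks'' step you flag is indeed the one non-trivial observation, and it holds by a routine comparison of universal properties.
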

\begin{proof}
\cite[Factorization Lemma]{MR0341469}.
\end{proof}
\begin{defn}
We call $\Cyl{f}$ a {\em mapping path-object} of $f$.
\end{defn}

\begin{cor} Let $\C$ be a category of fibrant objects and let $\D$ be a category with weak equivalences. Let $F \colon \C \to \D$ be a functor that sends trivial fibrations to weak equivalences. Then $F$ send weak equivalences to weak equivalences. \qed
\end{cor}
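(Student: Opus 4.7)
The plan is to exploit the Factorization Lemma, which lets us write any morphism as the composition of a fibration and a right inverse of a trivial fibration, and then use the 2-out-of-3 axiom (W2) in the target category $\D$ to transfer weak equivalences.

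First, let $f\colon A \to B$ be a weak equivalence in $\C$. By Lemma \ref{lem factorization}, factor $f = p \circ i$, where $p\colon \Cyl{f} \to B$ is a fibration and $i\colon A \to \Cyl{f}$ is a right inverse to the trivial fibration $q \vcentcolon= d_{0}^{*}(f)\colon \Cyl{f} \to A$ (trivial fibration by (FW1), since $d_0\colon B^I \to B$ is a trivial fibration by the path-object axiom (FW2) together with 2-out-of-3). In particular $i$ is itself a weak equivalence in $\C$. Applying (W2) to $f = p \circ i$ with $f, i \in \W$, we conclude that $p \in \W$, so $p$ is a trivial fibration.

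Now apply $F$. Since $p$ is a trivial fibration, $F(p)$ is a weak equivalence in $\D$ by hypothesis. For $i$, use the relation $q \circ i = \id_{A}$, which gives $F(q) \circ F(i) = \id_{F(A)}$ in $\D$. The identity $\id_{F(A)}$ is a weak equivalence by (W1) for $\D$, and $F(q)$ is a weak equivalence since $q$ is a trivial fibration. Hence by (W2) in $\D$ applied to the composite $F(q) \circ F(i)$, the morphism $F(i)$ is a weak equivalence. Finally, $F(f) = F(p) \circ F(i)$ is a composition of weak equivalences in $\D$, so it is a weak equivalence, completing the proof.

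There is essentially no obstacle beyond correctly identifying the structure provided by the Factorization Lemma; the only subtle point is remembering that 2-out-of-3 is needed in two different places (once in $\C$ to promote $p$ to a trivial fibration, and once in $\D$ to move the weak-equivalence property from $F(q)$ across $F(i)$ via the splitting), and that pullback stability (FW1) of trivial fibrations is what supplies the relevant trivial fibration $q = d_0^{*}(f)$ in the factorization.
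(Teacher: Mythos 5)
Your proof is correct and is exactly the argument the paper intends: the corollary is stated with no proof as an immediate consequence of the Factorization Lemma, and your combination of that lemma with (W2) in $\C$ (to see $p$ is a trivial fibration) and with (W1)/(W2) in $\D$ (to handle the section $i$) is the standard ``Ken Brown's lemma'' argument. One small notational slip: the trivial fibration $\Cyl{f} \to A$ admitting $i$ as a section is $f^{*}(d_0)$, the pullback of $d_0$ along $f$, whereas $d_{0}^{*}(f)$ in the paper's diagram denotes the other projection $\Cyl{f} \to B^{I}$; this does not affect the argument.
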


Now we consider pointed categories.

\begin{defn} Let $p$ be a fibration in a pointed category of fibrant objects. The {\em fibre} $F$ of $f$ is the pullback 
	\begin{equation}
	\xymatrix{
	F \ar[r]^{i} \ar[d] & E \ar[d]^{p}\\
	\pt \ar[r] & B
	}.
	\end{equation}
We express this situation by the diagram
	\begin{equation}
	\xymatrix{F \ar@{>->}[r]^{i}  & E \ar@{->>}[r]^{p} & B}.
	\end{equation}
	
\end{defn}

\begin{defn} Let $f\colon A \to B$ be a morphism in a pointed category of fibrant objects.	The {\em homotopy fibre} $\Cone{f}$ of $f$ is the fibre of $\Cyl{f} \overset{p}{\fib} B$, where $p$ is as in the Factorization Lemma (Lemma~\ref{lem factorization}). 
\end{defn}

\begin{lem}\label{lem fibre and homotopy fibre} Let $p$ be a fibration in a pointed category of fibrant objects with fibre $F$. Then the natural map 
	\begin{equation}
	F \longrightarrow \Cone{p}
\end{equation}
is a weak equivalence.
\end{lem}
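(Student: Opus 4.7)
The plan is to realize $F \to \Cone{p}$ as the pullback of a weak equivalence along a fibration, then invoke the CFO form of right-properness. By the Factorization Lemma, $\Cyl{p} = E \times_B B^I$ (pullback formed via $p$ and $d_0$) and $p' = d_1 \circ \pi_{B^I}\colon \Cyl{p} \fib B$. Setting $B^I_* \coloneqq \pt \times_B B^I$ (pullback formed via the basepoint $\pt \to B$ and $d_1$), an iterated-pullback computation gives
\[
\Cone{p} \;=\; \pt \times_B \Cyl{p} \;\cong\; E \times_B B^I_{*},
\]
with projection $r \colon \Cone{p} \to B^I_*$ a fibration, being the pullback of $p$ along the induced map $d_0|\colon B^I_* \to B$. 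Since the basepoint $\pt \to B$ factors as $d_0|\circ s(*_B)$, the fibre rewrites as $F = \pt \times_{B^I_*} \Cone{p}$, and the natural map $F \to \Cone{p}$, sending $e \mapsto (e, s(*_B))$, becomes the pullback of $s(*_B)\colon \pt \to B^I_*$ along $r$.

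Next, $s(*_B)$ is a weak equivalence. The map $d_1 \colon B^I \to B$ is a fibration, being the composition of the fibration $(d_0, d_1)$ with the fibration $\pi_2 \colon B \times B \to B$ (itself a pullback of $B \to \pt$), and is a weak equivalence by (W2) applied to $d_1 \circ s = \id_B$; hence $d_1$ is a trivial fibration. By (FW1), its pullback $B^I_* \to \pt$ is also a trivial fibration, and (W2) shows that its section $s(*_B)$ is a weak equivalence.

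Finally, $F \to \Cone{p}$ is the pullback of the weak equivalence $s(*_B)$ along the fibration $r$, and is therefore itself a weak equivalence. The principle that pullbacks of weak equivalences along fibrations are weak equivalences --- a CFO analogue of right-properness --- follows from Brown's gluing lemma \cite{MR0341469}: one factors $s(*_B)$ via its mapping path-object into a section of a trivial fibration followed by a trivial fibration, pulls back each factor along $r$ using (FW1), and concludes by 2-out-of-3. The main obstacle is precisely this right-properness step, since the naive mapping path-object factorization does not \emph{a priori} respect the base $B^I_*$; Brown's gluing lemma navigates this subtlety via a careful relative construction.
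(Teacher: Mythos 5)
Your argument is correct, but it takes a genuinely different route from the paper's. The paper disposes of the lemma in one line by applying Brown's result on maps of fibration sequences \cite[Lemma 4.3]{MR0341469} to the comparison of $F \rightarrowtail E \fib B$ with $\Cone{p} \rightarrowtail \Cyl{p} \fib B$ induced by the weak equivalence $i\colon E \to \Cyl{p}$ over $\id_B$. You instead transplant the classical point-set picture: writing $\Cone{p} \cong E \times_B B^I_*$, you exhibit $F \to \Cone{p}$ as the base change, along the fibration $r\colon \Cone{p} \fib B^I_*$, of the weak equivalence $\pt \to B^I_*$ given by the constant path at the basepoint, and then invoke right properness. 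Your pullback identifications, the verification that $r$ is a fibration via (F2), and the proof that $B^I_* \to \pt$ is a trivial fibration (so that its section is a weak equivalence by (W2)) are all correct. Both arguments ultimately rest on the same input, namely Brown's gluing lemma for pullbacks along fibrations: the paper's citation applies verbatim, whereas your route goes through right properness, which is not an axiom of a category of fibrant objects and must itself be deduced from the gluing lemma --- and, as you rightly flag, the sketch ``pull back each factor of the mapping path-object factorization using (FW1)'' only handles the trivial-fibration factor, not the section-of-a-trivial-fibration factor, so the deferral to Brown is doing real work there. What your approach buys is the explicit identification of $\Cone{p}$ as the total object of a fibration over the weakly contractible $B^I_*$ with fibre $F$, a useful structural fact in its own right; what the paper's approach buys is brevity and a reduction to a single quotable lemma.
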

\begin{proof} 
Apply \cite[Lemma 4.3]{MR0341469} to
	\begin{equation}
	\xymatrix{
	F \ar@{>->}[r] \ar[d] & E \ar[d]^{\wr} \ar@{->>}[r]^{p} & B \ar@{=}[d]\\
	Fp \ar@{>->}[r] & Np  \ar@{->>}[r] & B
	}.
\end{equation}
\end{proof}

\subsection{Homotopy Category}

\begin{notation}If $\C$ is a category, we write $\mathrm{Ob}\C$ for the objects of $\C$ and write $\Mor\C(A, B)$ for the space of morphisms from $A$ to $B$, for $A$, $B \inn \C$.
\end{notation}

\begin{defn} The {\em homotopy category} of a category $\C$ of fibrant objects with weak equivalences $\W$ is the localization 
	\begin{equation}
	\Ho{\C} \coloneqq \C[\W^{-1}].
	\end{equation}
\end{defn}
In other words, there is given a functor $\gamma\colon \C \to \Ho\C$, called the {\em localization functor}, with the property that for any functor $k\colon \C \to \D$ such that $k(t)$ is invertible in $\D$ for all $t \in \W$, there exist a unique functor $\Ho\C \to \D$ making the diagram
	\begin{equation}
	\xymatrix{
	& \Ho\C \ar@{-->}[rd] &\\
	\C \ar[ur]^{\gamma} \ar[rr]^k & & \D}
	\end{equation}
commute. 	

Often we write $\MorHo{\C}{A, B}$ for $\Mor{\Ho\C}(A, B)$. Note that there is no guarantee that $\MorHo{\C}{A, B}$ is a small set (see Corollary \ref{cor small set}).
\begin{defn}\label{Brown homotopy} Let $\C$ be a category of fibrant objects. Two morphisms 
	\begin{equation}f_{0}, f_{1}\colon A \tto B\end{equation}
are said to be {\em right-homotopic} if for some path-object $(B^{I}, s, d_{0}, d_{1})$ of $B$, there is a morphism $h\colon A \to B^{I}$ such that $f_{0} = d_{0}h$ and $f_{1} = d_{1}h$. 

The two are said to be {\em homotopic} if there is a weak equivalence $t\colon A' \to A$ such that $f_{0}t, f_{1}t\colon A' \tto B$ are right-homotopic.  
\end{defn}

Right-homotopy and homotopy are equivalence relations, and moreover, homotopy is compatible with the composition in $\C$ (cf.\ \cite[Section 2]{MR0341469}).

\begin{defn}
Let $\C$ be a category of fibrant objects. We denote the category of {\em homotopy classes} in $\C$ by $\pi\C$ and let $\pi\colon \C \to \pi\C$ denote the quotient functor. 
\end{defn}

The following is the fundamental result of Brown. 
\begin{thm}[{Brown \cite[Theorem 2.1]{MR0341469}}] Let $\C$ be a category of fibrant objects. Then $\pi\W \subseteq \pi\C$ admits a {\em calculus of right fractions}. 

It follows that, for $A$, $B \inn \C$
	\begin{equation}
	\MorHo{\C}{A, B} \cong \colim_{A' \we A} \Mor{\pi\C}(A', B)
	\end{equation}
and hence if $\gamma\colon \C \to \Ho\C$ is the localization functor, then
\begin{enumerate}
	\item any morphism in $\MorHo{\C}{A, B}$ can be written as a right-fraction 
	\begin{equation}
	\xymatrix{A & \ar[l]_{\,\,\gamma(t)^{-1}} A' \ar[r]^{\gamma(f)} & B}
	\end{equation}
where $t \in \W$ is a weak equivalence, and
	\item if $f_0, f_1$ are morphisms in $\Mor{\C}(A, B)$, then $\gamma(f_0) = \gamma(f_1)$ if and only if $f_0$ and $f_1$ are homotopic i.e.\ $\pi(f_{0}) = \pi(f_{1})$.
\end{enumerate}
\qed     
\end{thm}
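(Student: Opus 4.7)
The plan is to verify the axioms of a calculus of right fractions for the class $\pi\W$ inside $\pi\C$; once these are established, both ``It follows'' conclusions are formal consequences of the general Gabriel--Zisman theory of localization. Since $\pi\C$ is a well-defined category and $\pi\W$ contains the identities and is closed under composition (by (W2)), the substance lies in (i) the right Ore condition and (ii) right cancellation.

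My first step would be to show that every trivial fibration $p$ becomes an isomorphism in $\pi\C$. Given $p \colon X \afib Y$, I would form the pullback $X \times_Y X$ with projections $\pi_0, \pi_1$; both are trivial fibrations by (FW1). Using a path-object for $Y$ and a straightforward lifting argument, I would construct a right-homotopy between $\pi_0$ and $\pi_1$, and extract from it a section $\sigma \colon Y \to X$ of $p$ with $\sigma p$ right-homotopic to $\id_X$. Hence $\pi(p)$ is invertible. The same argument shows that any section of a trivial fibration (in particular, the map $i$ produced by the Factorization Lemma) becomes invertible in $\pi\C$.

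For the Ore condition, given a cospan $A \xrightarrow{f} B \xleftarrow{s} B'$ with $s \in \W$, I would apply the Factorization Lemma (Lemma~\ref{lem factorization}) to $s$: write $s = p \circ i$, where $i$ is a section of a trivial fibration (hence a weak equivalence) and $p$ is a fibration. By (W2), $p$ is itself a trivial fibration. Pulling $p$ back along $f$ yields a trivial fibration $t\colon A \times_B \Cyl{s} \to A$ by (FW1), together with $g'\colon A \times_B \Cyl{s} \to \Cyl{s}$ satisfying $p g' = f t$. Since $\pi(i)$ is invertible in $\pi\C$ by the previous step, setting $g \coloneqq \pi(i)^{-1}\pi(g')$ gives a map $A' \to B'$ in $\pi\C$ with $s g = p i \pi(i)^{-1} \pi(g') = p g' = f t$ in $\pi\C$, completing the required square.

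For cancellation, suppose $f_0, f_1\colon A \to B$ satisfy $s f_0$ right-homotopic to $s f_1$ via some $h\colon A \to C^I$ for an $s \colon B \to C$ in $\W$. Using a path-object $C^I$ for $C$, I would pull $(d_0,d_1)\colon C^I \fib C \times C$ back along $s \times s \colon B \times B \to C \times C$, combine this with a path-object of $B$, and apply the Factorization Lemma to $s$ itself; the outcome is a weak equivalence $t \colon A' \to A$ over which the homotopy $h$ lifts to a right-homotopy between $f_0 t$ and $f_1 t$. This cancellation step is the main obstacle: weak equivalences need not be fibrations, so one must interleave the factorization of $s$ with the path-object of $B$, and this is essentially the technical core of Section~2 of \cite{MR0341469}. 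Once both conditions are verified, the description of $\MorHo{\C}{A,B}$ as a filtered colimit of $\Mor{\pi\C}(A',B)$ over weak equivalences $A' \we A$, the representation of every morphism as a right fraction, and the homotopy characterization of equality in $\Ho\C$ all follow from the standard Gabriel--Zisman theorem on localization by a calculus of right fractions.
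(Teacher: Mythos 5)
The paper gives no argument of its own here (it simply cites Brown), so the only question is whether your plan is sound, and its first step is not. In a general category of fibrant objects a trivial fibration need \emph{not} admit a section, and $\pi(p)$ need not be invertible in $\pi\C$ for a trivial fibration $p$: weak equivalences are only \emph{formally} inverted when one passes from $\pi\C$ to $\Ho\C$. A counterexample lives inside the paper's own Example (3): in $\cat{Ch}(\Z)$ the map from the complex $(\Z \xrightarrow{\times 2} \Z)$ (degrees $1,0$) onto $\Z/2$ (degree $0$) is a degreewise epimorphism and a quasi-isomorphism, yet every chain map $\Z/2 \to (\Z \xrightarrow{\times 2} \Z)$ is zero, so this trivial fibration has no section and is not invertible in $\pi\cat{Ch}(\Z)$. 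The underlying issue is that the axioms (F0)--(FW2) contain no lifting property at all, so the ``straightforward lifting argument'' you invoke has nothing to run on; and even a right-homotopy between the two projections of $X\times_Y X$ would not let you ``extract'' a section of $p$.

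What your $X\times_Y X$ setup does prove --- and what is all you actually need --- is the weaker statement: \emph{if} a trivial fibration $p\colon X\afib Y$ happens to have a section $\sigma$, then $\sigma$ is a two-sided inverse of $p$ in $\pi\C$. Indeed $\pi_0\colon X\times_Y X \to X$ is a trivial fibration by (FW1) and $\pi_0\Delta=\id_X$, so the diagonal $\Delta$ is a weak equivalence by (W2); since $\pi_0\Delta=\pi_1\Delta$, the projections are homotopic in Brown's sense (precomposition with the weak equivalence $\Delta$ --- no right-homotopy or lifting is required), and composing with $(\sigma p,\id_X)\colon X \to X\times_Y X$ gives $\sigma p \simeq \id_X$. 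This applies to the map $i$ of the Factorization Lemma, which is by construction a section of the trivial fibration $\Cyl{s}\to B'$, so your Ore square can be repaired; alternatively, take $g=\pi(qg')$ for that projection $q$ and observe that $sq$ is right-homotopic to $p$ via the canonical map $\Cyl{s}\to B^{I}$, which avoids inverting anything. Finally, note that the right-cancellation axiom, which you defer to ``the technical core'' of Brown's Section 2, is exactly the part of Theorem 2.1 that does not follow from formal manipulations; as written, your treatment of it is a plan rather than a proof.
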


\begin{cor}\label{cor small set} Let $\C$ be a category of fibrant objects and let $A$ be an object in $\C$. Suppose that the category $\W_{A}$ of weak equivalences over $A$ is ``coinitially small'' i.e\ there exists a {\em set} $S_{A}$ of objects in $\C$ such that for any $A' \we A$, there is a $A'' \we A'$ such that $A'' \in S_{A}$,
then $\MorHo{\C}{A, B}$ is a small set for every $B \inn \C$.
\qed
\end{cor}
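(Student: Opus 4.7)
My plan is to deduce the corollary directly from Brown's theorem just proved, which identifies
\begin{equation*}
\MorHo{\C}{A, B} \cong \colim_{A' \we A} \Mor{\pi\C}(A', B).
\end{equation*}
The idea is that coinitial smallness of $\W_{A}$ allows one to replace this a priori large filtered colimit by a set-indexed one. Since $\C$ is assumed to be a (locally small) category, each $\Mor{\pi\C}(A'',B)$ is a set, so the conclusion will follow once we show that the natural map
\begin{equation*}
\coprod_{A'' \in S_{A}} \Mor{\pi\C}(A'', B) \longrightarrow \MorHo{\C}{A, B}
\end{equation*}
(sending a morphism $f\colon A'' \to B$ in $\pi\C$ to the right fraction $\gamma(t)^{-1}\gamma(f)$ for any chosen $t\colon A'' \we A$) is surjective.

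First I would unpack the colimit description: any element of $\MorHo{\C}{A,B}$ is represented by a right fraction $A \xleftarrow{t} A' \xrightarrow{f} B$ with $t \in \W$, by part (1) of Brown's theorem. Next, I would invoke the hypothesis: the coinitial smallness of $\W_{A}$ produces an object $A'' \in S_{A}$ together with a weak equivalence $u\colon A'' \we A'$. By the two-out-of-three property (W2), the composite $t \circ u\colon A'' \we A$ is again a weak equivalence, so the right fraction
\begin{equation*}
A \xleftarrow{t \circ u} A'' \xrightarrow{f \circ u} B
\end{equation*}
represents the same element of $\MorHo{\C}{A,B}$. This shows the surjectivity of the map above, and since the coproduct on the left is a set (small index, each summand a set), $\MorHo{\C}{A,B}$ is a small set.

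The only step requiring any care is the verification that passing from $(t, f)$ to $(t \circ u, f \circ u)$ really gives the same element in the colimit; this is immediate from the definition of a colimit, since both pairs are connected by the morphism $u\colon A'' \to A'$ in the index category $\W_{A}$. There is no serious obstacle here: the substance is entirely in Brown's theorem, and the corollary amounts to observing that coinitial smallness is exactly what is needed to extract a small set of representatives from the filtered colimit.
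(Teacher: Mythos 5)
Your route is the intended one: the paper's own ``proof'' is only a citation to Gabriel--Zisman, and the standard argument behind that citation is exactly what you give, namely that Brown's colimit formula together with coinitial smallness exhibits $\MorHo{\C}{A,B}$ as the image of a set under a surjection. One detail in your write-up needs repair, though. The map you define, $\coprod_{A'' \in S_{A}} \Mor{\pi\C}(A'', B) \to \MorHo{\C}{A,B}$, depends on a single chosen weak equivalence $t_{A''}\colon A'' \we A$ for each $A''$, but your surjectivity argument produces the fraction $(t\circ u,\, f\circ u)$, whose structure map $t\circ u\colon A'' \to A$ need not agree with the chosen $t_{A''}$. Two fractions with the same source $A''$ but different structure maps to $A$ live over different objects of the index category $\W_{A}$ and need not represent the same element of the colimit (take $A''=A$ with $t_{A''}=\id$ and $s$ a self-weak-equivalence not homotopic to a genuine morphism's inverse: the class of $(s,\id)$ is $\gamma(s)^{-1}$, which your map may miss). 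The fix is simply to enlarge the index set to record the structure map as well: take the domain of the surjection to be $\coprod_{A''\in S_{A}} \Mor{\W}(A'',A)\times\Mor{\pi\C}(A'',B)$, which is still a set because $\C$ is locally small; your surjectivity argument then goes through verbatim, and the conclusion follows by replacement. With that adjustment the proof is correct and agrees with the cited argument.
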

\begin{proof} See \cite[Proposition 2.4]{MR0210125}.
\end{proof}

Now we consider pointed categories.
\begin{defn} Let $B$ be an object of a pointed category of fibrant objects. A {\em loop-object} of $B$ is the fibre $\Omega B$ of $(d_0, d_1)\colon B^{I} \to B \times B$, where $(B^{I}, s, d_0, d_1)$ is a path-object of $B$.
\end{defn}

\begin{lem}\label{lem loop object}  Let $\C$ be a pointed category of fibrant objects. Then $\Omega$ defines a functor 
	\begin{equation}
	\Omega\colon \Ho{\C} \to \Ho{\C},
	\end{equation}
called the {\em loop-object} functor.
\begin{enumerate}
\item For any $B \inn \C$, the object $\Omega B$ is naturally a group object in $\Ho\C$ and $\Omega^2 B$ is naturally an abelian group object in $\Ho\C$.
\item For any fibration $p\colon E \fib B$ with fibre $F$, there is a natural right-action $F \times \Omega B \to F$ in $\Ho\C$. In particular, we have a natural map $\Omega B \to F$ in $\Ho\C$.
\end{enumerate} 
\end{lem}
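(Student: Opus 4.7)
The plan is to follow Brown's Section~4 of \cite{MR0341469}: first establish that $\Omega$ descends to a well-defined functor on $\Ho\C$, then construct the group (and abelian group) structure via path concatenation, and finally produce the fibre action by concatenating loops onto paths in the homotopy fibre.

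First I would show that $\Omega B$ is independent of the chosen path-object up to canonical isomorphism in $\Ho\C$. Given two path-objects $(B^{I}, s, d_0, d_1)$ and $(\tilde B^{I}, \tilde s, \tilde d_0, \tilde d_1)$ for $B$, the pullback $B^{I} \times_{B\times B} \tilde B^{I}$ along $(d_0, d_1)$ and $(\tilde d_0, \tilde d_1)$ is itself a path-object for $B$ and projects via trivial fibrations onto each factor by (F2) and (FW1). Passing to fibres over $(\pt, \pt)$ and invoking Lemma~\ref{lem fibre and homotopy fibre} produces a canonical weak equivalence between the two candidate loop-objects. For a morphism $f\colon A \to B$, a compatible lift $A^{I} \to B^{I}$ over $f\times f$ is obtained by applying Lemma~\ref{lem factorization} to $(fd_0^{A}, fd_1^{A})\colon A^{I} \to B \times B$ and comparing the result with $B^{I}$ via path-object uniqueness; restriction to the fibre over $(\pt,\pt)$ defines $\Omega f$. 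A right-homotopy between $f_0, f_1\colon A \to B$ is itself a map $A \to B^{I}$, which by the same uniqueness forces $\Omega f_0 = \Omega f_1$ in $\Ho\C$, so $\Omega$ descends to a functor on $\Ho\C$.

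The multiplication on $\Omega B$ comes from path concatenation. Form the pullback $B^{I\ast I} \coloneqq B^{I}\times_{B}B^{I}$ along $d_1^{(1)}$ and $d_0^{(2)}$; equipped with the map $(d_0\pi_1, d_1\pi_2)\colon B^{I\ast I}\to B\times B$ and the section $(s,s)\colon B \to B^{I\ast I}$, it is (after replacing its structure map by a fibration via Lemma~\ref{lem factorization} if needed) a path-object for $B$, and its fibre over $(\pt,\pt)$ is naturally $\Omega B\times\Omega B$. Comparing with $B^{I}$ via path-object uniqueness produces the multiplication $\mu\colon \Omega B\times\Omega B \to \Omega B$ in $\Ho\C$. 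The unit is induced by $s$, and the inverse comes from the involution of $B^{I}$ swapping $d_0 \leftrightarrow d_1$. Associativity and the unit/inverse axioms reduce to comparing iterated pullbacks $B^{I}\times_B B^{I}\times_B B^{I}$, each again a path-object of $B$ and canonically identified with $B^{I}$ in $\Ho\C$. For $\Omega^{2}B$, the same construction applied independently in the two loop coordinates yields two multiplications satisfying the Eckmann-Hilton interchange, which then forces them to coincide and to be commutative.

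For the fibre action, by Lemma~\ref{lem fibre and homotopy fibre} I may replace $F$ by the homotopy fibre $\Cone{p}$, which is the fibre of $\Cyl{p} = E\times_B B^{I}\fib B$ over $\pt$. Concatenating a loop $\gamma\in\Omega B$ onto the path component of an element of $\Cyl{p}$ defines a map $\Cyl{p}\times\Omega B \to \Cyl{p}$ over $B$, constructed via the pullback $E\times_B B^{I}\times_B B^{I}$ (analogous to $\mu$ above, but with the first path starting at $p(e)$ rather than at $\pt$). Passing to fibres over $\pt$ yields $\Cone{p}\times\Omega B \to \Cone{p}$, and transporting along $F\we \Cone{p}$ produces the action $F\times\Omega B \to F$ in $\Ho\C$, with the map $\Omega B \to F$ obtained as the restriction to $\{\pt\}\times\Omega B$. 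The main difficulty throughout is the coherence bookkeeping: each group axiom, the abelian property, and the compatibility of the action with $\mu$ all reduce to identifying two distinct pullback constructions as the same morphism in $\Ho\C$, and while path-object uniqueness combined with Lemma~\ref{lem factorization} and Lemma~\ref{lem fibre and homotopy fibre} supplies a uniform tool, the Eckmann-Hilton step and the verification that the action respects $\mu$ are the most delicate parts.
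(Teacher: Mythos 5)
The paper offers no argument here at all---its ``proof'' is the single line ``See [Section 4] of Brown''---so the only question is whether your sketch of Brown's argument actually works. It has a load-bearing false step. You claim that for two path-objects $(B^{I},s,d_{0},d_{1})$ and $(\tilde B^{I},\tilde s,\tilde d_{0},\tilde d_{1})$ the pullback $B^{I}\times_{B\times B}\tilde B^{I}$ projects onto each factor by \emph{trivial} fibrations. (F2) gives that these projections are fibrations, but (FW1) does not apply, because $(d_{0},d_{1})\colon B^{I}\to B\times B$ is only a fibration, not a trivial one, and the claim is simply false in general. Take $\C=\Top$ with homotopy equivalences and Hurewicz fibrations (Example~\ref{ex model}(1)), $B=S^{1}$, and both path-objects the standard $B^{[0,1]}$: the pullback is the space of pairs of paths with common endpoints, which is homotopy equivalent to the free loop space $LS^{1}\simeq S^{1}\times\Z$, so the projection to $B^{[0,1]}\simeq S^{1}$ is not even a bijection on $\pi_{0}$. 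Since you invoke this ``path-object uniqueness via the fibre product over $B\times B$'' three times---for well-definedness of $\Omega B$, for the definition of $\Omega f$ on morphisms, and for producing the multiplication $\mu$ by comparison with $B^{I}$---the gap propagates through most of the argument. A related slip: the fibre of $(d_{0}\pi_{1},d_{1}\pi_{2})\colon B^{I}\times_{B}B^{I}\to B\times B$ over $(\pt,\pt)$ is \emph{not} $\Omega B\times\Omega B$; it consists of pairs (path from $\pt$, path to $\pt$) with matching \emph{midpoint}, which is another model of $\Omega B$ itself, and $\Omega B\times\Omega B$ sits inside it as the locus where the midpoint is also the basepoint.

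The parts of your sketch that only use the one-coordinate concatenation pullback $B^{I}\times_{B}B^{I}$ are sound, since there the projection $\pi_{1}$ really is the pullback of the trivial fibration $d_{0}$ along $d_{1}$ and (FW1) applies; likewise the fibre-action construction via $\Cyl{p}=E\times_{B}B^{I}$ and Lemma~\ref{lem fibre and homotopy fibre} is the right idea. But to repair the comparison of path-objects you need Brown's actual mechanism: he identifies $\MorHo{\C}{A,\Omega B}$ with homotopy classes of right-homotopies from the zero map to itself (using the Section~2 results that composition of homotopies is well defined after refining $A$ by a weak equivalence), which is manifestly independent of the chosen path-object and carries the concatenation group structure; $\Omega B$ then acquires its group-object structure, functoriality, and independence of choices by Yoneda. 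Without some such device, ``canonically identified with $B^{I}$ in $\Ho\C$'' is not available, and the associativity, unit, Eckmann--Hilton, and action-compatibility verifications you defer to it do not go through as written.
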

\begin{proof}
See \cite[Section 4]{MR0341469}.
\end{proof}

\begin{thm}\label{thm Puppe} Let $\C$ be a pointed category of fibrant objects and let $p\colon E \fib B$ be a fibration with fibre $F$. Then for any $D \in \C$, there is an exact sequence 
	\begin{equation*}
	\dots \to \MorHo{\C}{D, \Omega^{2}B} \to \MorHo{\C}{D, \Omega F} \to \MorHo{\C}{D, \Omega E} \to \MorHo{\C}{D, \Omega B} \to \MorHo{\C}{D, F} \to \MorHo{\C}{D, E} \to \MorHo{\C}{D, B}.
	\end{equation*}
\end{thm}
\begin{proof} See \cite[Section 4]{MR0341469} and \cite[Section I.3]{MR0223432}.
\end{proof}

Note that while $\Ho\C$ depends only on the weak equivalences, the loop-object functor $\Omega$ depends also on the fibrations.

\begin{defn}
Let $\C$ be a pointed category of fibrant objects. We define the {\em stable homotopy category} of $\C$ as the category 
	\begin{equation}
	\SW\C \coloneqq \Ho\C[\Omega^{-1}],
	\end{equation}
obtained from $\Ho\C$ by inverting the endofunctor $\Omega$.
\end{defn}

Objects of $\SW\C$ are $(A, n)$ with $A \inn \Ho\C$ and $n \in \Z$ and the morphisms are given by
	\begin{equation}
	\Mor{\SW\C}((A, n), (B, m)) \coloneqq \colim_{k \to \infty}\MorHo{\C}{\Omega^{n+k}A, \Omega^{m+k}B}.
	\end{equation} 

For $n \in \Z$, we have natural functors, also denoted $\Omega^n$, 
	\begin{equation}
	\Omega^n\colon \Ho\C \to \SW\C,\quad A \mapsto (A, n),
	\end{equation}
which sends morphisms in $\Mor{\Ho{\C}}{A, B}$ to the corresponding element in $\Mor{\SW\C}((A, n), (B, n))$.
\begin{thm}\label{thm fibrant object to triangulated} Let $\C$ be a pointed category of fibrant objects. Then the stable homotopy category $\SW\C$ is a {\em triangulated category} with the {\em shift} 
	\begin{equation}
	\Sigma = \Omega^{-1}\colon\SW\C \to \SW\C
	\end{equation}
given by $(A, n) \mapsto (A, n-1)$ and the {\em distinguished triangles} given by triangles isomorphic to triangles of the form
	\begin{equation}
	\xymatrix{(\Omega B, n) \ar[r] & (F, n) \ar[r] & (E, n) \ar[r] & (B, n)},
	\end{equation}
where $n \in \Z$ and $E \to B$ is a fibration, $F \to E$ is the fibre inclusion and $\Omega B \to F$ is the morphism obtained from Lemma~\ref{lem loop object}. 
\end{thm}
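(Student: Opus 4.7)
The plan is to verify the Verdier axioms (TR0)--(TR4) for $\SW\C$, leaning heavily on the constructions of \cite[Sections~4--5]{MR0341469}. I would first establish the additive structure: by Lemma~\ref{lem loop object}(1), $\Omega^2 B$ is naturally an abelian group object in $\Ho\C$, so for $k \geq 2$ each set $\MorHo{\C}{\Omega^{n+k}A, \Omega^{m+k}B}$ carries an abelian group structure compatible with the colimit defining $\Mor{\SW\C}((A,n),(B,m))$, and composition is bilinear. The common initial-terminal object $\pt$ serves as a zero object, and biproducts come from products in $\C$ (which exist by (F2)--(F3)) after stabilization. The shift $\Sigma = \Omega^{-1}$ is an auto-equivalence by construction of the localization $\Ho\C[\Omega^{-1}]$.

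For (TR1), any morphism in $\SW\C$ may be represented, up to shift, by an honest morphism $f \colon A \to B$ in $\C$; apply the Factorization Lemma (Lemma~\ref{lem factorization}) to replace $f$ by a fibration $p \colon \Cyl{f} \fib B$, so that the fibre sequence of $p$ together with the connecting map of Lemma~\ref{lem loop object}(2) yields the required distinguished triangle. The identity triangle is distinguished because it is isomorphic to the one produced by the fibration $A \fib \pt$. For (TR2), that the rotation of a fibration sequence $F \to E \to B$ to $\Omega B \to F \to E$ is again distinguished is essentially the content of \cite[Section~4]{MR0341469}: Lemma~\ref{lem fibre and homotopy fibre} identifies the iterated homotopy fibre with $\Omega B$, and the appropriate sign conventions emerge from the group-object structure on $\Omega B$.

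For (TR3), a square of morphisms in $\SW\C$ between two distinguished triangles can be represented, after a zig-zag of weak equivalences, by a strict morphism of fibration sequences in $\C$; the functoriality of pullbacks (F2) then provides the extension on fibres. The main obstacle is (TR4), the octahedral axiom. Given two composable morphisms in $\SW\C$, represent them by composable fibrations $p \colon E \fib B$ and $q \colon B \fib C$ and consider the fibres $\Cone{p}$, $\Cone{q}$, and $\Cone{(qp)}$. One must produce a fibration $\Cone{(qp)} \fib \Cone{q}$ with fibre $\Cone{p}$ and check that the four resulting distinguished triangles fit together into an octahedron. This requires carefully pulling back the tower $E \fib B \fib C$ along the inclusions of fibres, identifying the four faces, and verifying that the four connecting maps to $\Omega C$ are coherently compatible; this coherence ultimately relies on the naturality of Lemma~\ref{lem loop object}(2) and should follow, though not effortlessly, by diagram chases parallel to those in the model-categorical case.
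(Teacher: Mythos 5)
Your outline is the standard verification of the Verdier axioms via fibration sequences, which is exactly the argument the paper defers to: its proof of Theorem~\ref{thm fibrant object to triangulated} consists solely of citations to the literature, where the steps you sketch (stabilized abelian group structures from Lemma~\ref{lem loop object}, the Factorization Lemma for (TR1), rotation via iterated fibres and Lemma~\ref{lem fibre and homotopy fibre} for (TR2), rectification to strict morphisms of fibration sequences for (TR3), and composable fibrations for (TR4)) are carried out in detail. Nothing in your plan is wrong, though the parts you leave as ``diagram chases'' --- rectifying a homotopy-commutative square over a fibration to a strictly commutative one for (TR3), and the sign conventions in (TR2) --- are precisely where the cited sources spend most of their effort.
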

\begin{proof} See \cite{MR0224090} or \cite{MR1650134, MR1867203}.
\end{proof}

\begin{rem}\label{rem distinguished triangle} We note that for any $f \in \MorHo{\C}{A, B}$ and $n \in \Z$, we have a natural distinguished triangle
	\begin{equation}
	\xymatrix{(\Omega B, n) \ar[r] & (Ff, n) \ar[r] & (A, n) \ar[r]^{\Omega^{n}f} & (B, n).
	}
	\end{equation} 
\end{rem}
\begin{defn} We say that a pointed category of fibrant objects $\C$ is {\em stable}, if the loop functor $\Omega\colon \Ho\C \to \Ho\C$ is invertible. 
\end{defn}

\begin{rem}\label{prop stable => triangulated} 
If $\C$ is a {\em stable} pointed category of fibrant objects, then 
	\begin{equation}
	\Omega^0\colon \Ho\C \to \SW\C
	\end{equation}
is an equivalence of categories. In particular, $\Ho\C$ is naturally a triangulated category with shift $\Sigma = \Omega^{-1}\colon \Ho\C \to \Ho\C$. 
\end{rem}

\begin{ex} Let $\M$ be a pointed Quillen model category and let $\M_{f}$ be the full subcategory of fibrant objects in $\M$, considered a category of fibrant objects as in Example~\ref{ex model}. Then the inclusion $\M_{f} \to \M$ induces an equivalence $\Ho{\M_{f}} \cong \Ho\M$, with compatible loop-objects and fibration sequences. Compare \cite{MR0341469} and \cite{MR0223432}.
\end{ex}


\subsection{Homology Theories and Localizations}

\begin{defn}\label{defn fht}
A {\em homology theory} on a pointed category of fibrant objects $\C$ is a homology theory on $\SW\C$ i.e.\ an exact functor $\H\colon \SW\C \to \Ab$.
\end{defn}

\begin{defn} Let $\C$ be a pointed category of fibrant objects and let $\H$ be a homology theory on $\C$.

A morphism $t \colon A \to B$ is said to be an {\em $\H$-equivalence} if the induced maps 
	\begin{equation}
	(\Omega^{n}t)_{*} \colon \H(A, n) \to \H(B, n)
	\end{equation} are isomorphisms for all $n \in \Z$. 
	
An object $F \inn \C$ is said to be {\em $\H$-acyclic} if $\H(F, n) = 0$ for all $n \in \Z$.
\end{defn}

Note that since homology theories are homotopy invariant by definition, weak equivalences in $\C$ are $\H$-equivalences.

\begin{lem}\label{lem H-eq H-acyclic} Let $\C$ be a pointed category of fibrant objects and let $\H$ be a homology theory on $\C$. Then a morphism $t$ in $\C$ is an $\H$-equivalence if and only if its homotopy fibre $\Cone{t}$ is $\H$-acyclic.  
\end{lem}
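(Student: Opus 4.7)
The plan is to deduce both directions from the long exact sequence of a triangle in $\SW\C$. By Remark~\ref{rem distinguished triangle}, for each $n \in \Z$ the morphism $t \colon A \to B$ fits into a distinguished triangle
\[
(\Omega B, n) \longrightarrow (\Cone{t}, n) \longrightarrow (A, n) \xrightarrow{\Omega^n t} (B, n)
\]
in $\SW\C$, and since $\Omega$ has been formally inverted, $(\Omega B, n)$ is canonically identified with $(B, n+1)$. Applying the homology theory $\H \colon \SW\C \to \Ab$, which by Definition~\ref{defn fht} is exact and hence carries distinguished triangles together with their rotations into long exact sequences of abelian groups, I obtain
\[
\cdots \to \H(B, n+1) \to \H(\Cone{t}, n) \to \H(A, n) \xrightarrow{(\Omega^n t)_*} \H(B, n) \to \H(\Cone{t}, n-1) \to \cdots
\]
running over all $n \in \Z$.

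With this sequence in hand, both implications are immediate. If $\H(\Cone{t}, n)$ vanishes for every $n$, then exactness forces each $(\Omega^n t)_*$ to be both injective and surjective, so $t$ is an $\H$-equivalence. Conversely, if every $(\Omega^n t)_*$ is an isomorphism, then surjectivity one degree up kills the image of the connecting map $\H(B, n+1) \to \H(\Cone{t}, n)$, and injectivity at degree $n$ kills the image of the adjacent map $\H(\Cone{t}, n) \to \H(A, n)$; exactness at $\H(\Cone{t}, n)$ then forces $\H(\Cone{t}, n) = 0$, and since $n$ was arbitrary, $\Cone{t}$ is $\H$-acyclic.

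I do not anticipate any real obstacle: the argument is a formal consequence of the triangulated structure on $\SW\C$ established in Theorem~\ref{thm fibrant object to triangulated}, combined with the exactness built into the definition of a homology theory. The only point requiring mild care is bookkeeping with the shift convention $\Sigma = \Omega^{-1}$ when rotating the triangle to extract the long exact sequence, and making the identification $(\Omega B, n) \cong (B, n+1)$ coherent with the indexing on $\H$; once this is pinned down the proof reduces to a one-step diagram chase at each degree.
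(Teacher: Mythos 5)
Your proof is correct and follows exactly the route the paper takes: the paper's proof is the one-line remark that the claim is ``clear from the long-exact sequence associated to the distinguished triangle of Remark~\ref{rem distinguished triangle},'' and your argument simply spells out that diagram chase in both directions. No issues.
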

\begin{proof} Clear from the long-exact sequence associated to the distinguished triangle of Remark~\ref{rem distinguished triangle}.
\end{proof}

\begin{cor}\label{cor fibration H-eq H-acyclic} Let $\C$ be a pointed category of fibrant objects and let $\H$ be a homology theory on $\C$. Then a fibration $p \in \C$ with fibre $F$ is an $\H$-equivalence if and only if $F$ is $\H$-acyclic. 
\end{cor}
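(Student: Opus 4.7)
The plan is to deduce this corollary immediately from two earlier results: Lemma~\ref{lem H-eq H-acyclic}, which characterizes $\H$-equivalences via the acyclicity of their homotopy fibres, and Lemma~\ref{lem fibre and homotopy fibre}, which identifies the fibre of a fibration with its homotopy fibre up to weak equivalence.

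More precisely, first I would invoke Lemma~\ref{lem H-eq H-acyclic} applied to the fibration $p$: this says $p$ is an $\H$-equivalence if and only if the homotopy fibre $\Cone{p}$ is $\H$-acyclic. Next, by Lemma~\ref{lem fibre and homotopy fibre}, the natural map $F \to \Cone{p}$ from the strict fibre to the homotopy fibre is a weak equivalence. Since homology theories, being defined on $\SW\C$, are by construction homotopy invariant (indeed, weak equivalences in $\C$ become isomorphisms in $\Ho\C$ and hence in $\SW\C$), this weak equivalence induces isomorphisms $\H(F, n) \cong \H(\Cone{p}, n)$ for every $n \in \Z$. Therefore $F$ is $\H$-acyclic if and only if $\Cone{p}$ is $\H$-acyclic, and combining this with the first step yields the stated equivalence.

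There is essentially no obstacle here; the content of the corollary is just the combination of Lemmas~\ref{lem H-eq H-acyclic} and~\ref{lem fibre and homotopy fibre} together with the homotopy invariance of $\H$. The only point worth checking explicitly is that the map $F \to \Cone{p}$ constructed in the proof of Lemma~\ref{lem fibre and homotopy fibre} is indeed compatible with the identification used in the definition of $\H$-acyclicity on $\Omega^n$-shifted objects, but this is automatic since $\Omega$ is a functor on $\Ho\C$ and hence sends weak equivalences to isomorphisms at every level $n$.
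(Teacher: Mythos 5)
Your argument is correct and follows exactly the paper's own proof: combine Lemma~\ref{lem H-eq H-acyclic} (a map is an $\H$-equivalence iff its homotopy fibre is $\H$-acyclic) with Lemma~\ref{lem fibre and homotopy fibre} (the weak equivalence $F \to \Cone{p}$), using that weak equivalences are $\H$-equivalences. Nothing further is needed.
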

\begin{proof} By Lemma~\ref{lem fibre and homotopy fibre}, the natural map $F \to \Cone{p}$ is a weak equivalence, hence an $\H$-equivalence. The proof is complete by Lemma~\ref{lem H-eq H-acyclic}.
\end{proof}

\begin{thm}\label{thm fibre homology fibrant object} Let $\C$ be a pointed category of fibrant objects and let $\H$ be a homology theory on $\C$. Then $\H$-equivalences and fibrations define a pointed category of fibrant objects on $\C$, denoted $R_{\H}\C$, with the same path and loop objects as in $\C$.
\end{thm}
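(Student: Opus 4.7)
The plan is to verify axioms (F0)--(FW2) for the pair (fibrations of $\C$, $\H$-equivalences), keeping the old fibrations. Axioms (F0)--(F3) are immediate since the class of fibrations is unchanged. For (W1), every isomorphism is a weak equivalence in $\C$, and weak equivalences in $\C$ are $\H$-equivalences by the homotopy invariance of $\H$ (which follows because $\H$ factors through $\SW\C$). For (W2), an $\H$-equivalence is by definition a morphism $t$ such that $\H(\Omega^{n}t)_{*}$ is an isomorphism for every $n \in \Z$; since isomorphisms of abelian groups satisfy two-out-of-three, so do $\H$-equivalences.

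The main content lies in (FW1). Suppose $p \colon E \fib B$ is a fibration that is also an $\H$-equivalence, and form the pullback
\begin{equation}
\xymatrix{E' \ar[r] \ar@{->>}[d]_{p'} & E \ar@{->>}[d]^{p}\\ B' \ar[r]^{g} & B}
\end{equation}
along some morphism $g \colon B' \to B$ in $\C$. Then $p'$ is a fibration by (F2). To show $p'$ is an $\H$-equivalence, I would apply Corollary~\ref{cor fibration H-eq H-acyclic} twice: since $p$ is an $\H$-equivalence, its fibre $F$ is $\H$-acyclic; but the fibre of $p'$ over the basepoint of $B'$ equals the fibre of $p$ over $g(\pt) = \pt$, so $p'$ has the same fibre $F$, which is $\H$-acyclic, and hence $p'$ is an $\H$-equivalence.

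Finally, for (FW2), I would simply take any path-object $(B^{I}, s, d_{0}, d_{1})$ from the original structure on $\C$. The fibration $d = (d_{0}, d_{1})$ is still a fibration, and $s$, being a weak equivalence in $\C$, is automatically an $\H$-equivalence. Thus the same path-object works, which also justifies the claim that $R_{\H}\C$ has the same path-objects and (as consequence, being defined as the fibre of $d$) the same loop-objects as $\C$.

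The only step that is not essentially formal is (FW1); the main obstacle there is ensuring the fibre of a pullback fibration is canonically the same as the fibre of the original fibration, but this is immediate from pasting pullback squares, after which Corollary~\ref{cor fibration H-eq H-acyclic} closes the argument.
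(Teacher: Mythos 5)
Your proposal is correct and follows essentially the same route as the paper: the paper also observes that only (FW1) and (FW2) require argument, proves (FW1) by identifying the fibre of the pulled-back fibration with the fibre $F$ of the original and applying Corollary~\ref{cor fibration H-eq H-acyclic} in both directions, and proves (FW2) by reusing the path-objects of $\C$ since weak equivalences are $\H$-equivalences. Your additional spelled-out checks of (W1) and (W2) are fine and match what the paper dismisses as clear.
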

\begin{proof} It is clear that $\H$-equivalences form a subcategory of weak equivalences. Hence we need to show the compatibility conditions (FW1) and (FW2) are satisfied.
  
(FW1) Let $p\colon E \fib B$ be a fibration which is also an $\H$-equivalence. We need to show that for any $f\colon A \to B$, the pullback $f^*(p)$ is again an $\H$-equivalence. But this is immediate from Corollary~\ref{cor fibration H-eq H-acyclic} applied to the diagram:
	\begin{equation}\label{eq pullback of fibration seq}
	\xymatrix{
	 F  \ar@{=}[d]  \ar@{>->}[r] & E \times_{B} A  \ar@{->>}[r]^-{f^{*}(p)} \ar[d] & A \ar[d]^{f}\\
	 F    \ar@{>->}[r] & E  \ar@{->>}[r]^{p} & B 
	 },
	\end{equation}
where $F$ is the fibre of $p$.

(FW2) Since weak equivalences are $\H$-equivalences, path-objects in $\C$ also give path-objects in the new category of fibrant objects $R_{\H}\C$. 
\end{proof}

\begin{defn}
Let $\C$ be a pointed category of fibrant objects and let $\S \subseteq \C$ be a class of morphisms. We say that a morphism $t \in \Mor\C(A, B)$ is a {\em $\S^{-1}$-weak equivalence} if for any homology theory $\H \colon \SW\C \to \Ab$ such that every $s \in \S$ is an $\H$-equivalence, $t$ is an $\H$-equivalence.
\end{defn}

\begin{thm}\label{thm localization} Let $\C$ be a pointed category of fibrant objects and let $\S \subseteq \C$ be a class of morphisms. Then $\S^{-1}$-weak equivalences and fibrations define a pointed category of fibrant objects, denoted $\R_{\S}\C$.  
The stable homotopy category $\SW{\R_{\S}\C}$ is naturally equivalent to the Verdier localization $\SW\C[(\Omega^{0}\S)^{-1}]$ as a triangulated category.
\end{thm}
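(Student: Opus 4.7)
My plan is to first establish the category-of-fibrant-objects structure on $\R_{\S}\C$ by mimicking the proof of Theorem~\ref{thm fibre homology fibrant object}, and then to deduce the triangulated localization statement via a Yoneda argument using representable functors as homology theories.

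For the structural part, (W1) is immediate: isomorphisms are weak equivalences in $\C$, hence $\H$-equivalences for every homology theory $\H$, hence $\S^{-1}$-weak equivalences. (W2) follows because, for each fixed $\H$, the class of $\H$-equivalences satisfies 2-out-of-3 (from the long exact sequence of Lemma~\ref{lem H-eq H-acyclic}), and this property is preserved under arbitrary intersection over all $\H$ inverting $\S$. (FW2) holds because path-objects in $\C$ are still path-objects in $\R_{\S}\C$, since weak equivalences in $\C$ are $\S^{-1}$-weak equivalences. For (FW1), note that the pullback of a fibration preserves its fibre (diagram~\eqref{eq pullback of fibration seq}); so if $p$ is a fibration and $\S^{-1}$-weak equivalence, Corollary~\ref{cor fibration H-eq H-acyclic} says its fibre $F$ is $\H$-acyclic for every $\H$ inverting $\S$, and then the same corollary applied to the pullback $f^{*}(p)$ (which has the same fibre $F$) shows that $f^{*}(p)$ is an $\H$-equivalence for every such $\H$, i.e.\ an $\S^{-1}$-weak equivalence.

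For the universal property, the identity functor $\C \to \R_{\S}\C$ is exact and induces a functor $q \colon \SW{\C} \to \SW{\R_{\S}\C}$. Since the path- and loop-objects coincide and the fibrations are identical in $\C$ and $\R_{\S}\C$, the distinguished triangles of $\SW{\C}$ map to distinguished triangles of $\SW{\R_{\S}\C}$, so $q$ is triangulated. Every $s \in \S$ is trivially an $\S^{-1}$-weak equivalence, so $q(\Omega^{0}s)$ is an isomorphism; hence $q$ factors through the Verdier localization $j \colon \SW{\C} \to \SW{\C}[(\Omega^{0}\S)^{-1}]$, giving a triangulated functor $\bar q$.

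For the inverse direction, I want to show that $j$ itself descends along $q$, i.e.\ that $j$ inverts $\Omega^{0}t$ for every $\S^{-1}$-weak equivalence $t$. This is the key step, and the only real obstacle. For each object $X$ of $\SW{\C}[(\Omega^{0}\S)^{-1}]$, the composition $\Hom(X, j(-)) \colon \SW{\C} \to \Ab$ is a homological functor (since $j$ is triangulated), hence a homology theory on $\C$ in the sense of Definition~\ref{defn fht}. By construction it inverts $\Omega^{0}\S$, so by the very definition of $\S^{-1}$-weak equivalence it inverts $\Omega^{0}t$ for every such $t$. Yoneda then gives that $j(\Omega^{0}t)$ is an isomorphism in $\SW{\C}[(\Omega^{0}\S)^{-1}]$. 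Because morphisms in $\SW{\R_{\S}\C}$ are right fractions $\gamma(f)\gamma(t)^{-1}$ (via Brown's calculus of right fractions applied to $\R_{\S}\C$, and then inverting $\Omega$), $j$ now descends to a triangulated functor $\bar j \colon \SW{\R_{\S}\C} \to \SW{\C}[(\Omega^{0}\S)^{-1}]$. Finally $\bar q$ and $\bar j$ act as the identity on underlying objects and on the image of $\SW{\C}$, so the universal properties of the two localizations force $\bar q \circ \bar j$ and $\bar j \circ \bar q$ to be the identity, giving the desired equivalence of triangulated categories.
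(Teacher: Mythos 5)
Your proposal is correct and follows essentially the same route as the paper: the structural part is obtained by intersecting the categories $R_{\H}\C$ of Theorem~\ref{thm fibre homology fibrant object} over all homology theories inverting $\S$, and the key step for the localization statement is the identical Yoneda argument, using representable homological functors as homology theories to show that any triangulated functor inverting $\Omega^{0}\S$ must invert all $\S^{-1}$-weak equivalences. The only (inessential) difference is one of packaging: the paper verifies the universal property of $Q\colon \SW\C \to \SW{\R_{\S}\C}$ against an arbitrary triangulated functor $R$, whereas you apply the same trick to the single functor $j$ and then exhibit an explicit mutually inverse pair.
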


\begin{proof} Considering all homology theories $\H\colon \SW\C \to \Ab$ in which every $t \in \S$ is an $\H$-equivalence in Theorem~\ref{thm fibre homology fibrant object}, we see that $\R_{\S}\C$ is indeed a category of fibrant objects.

Now consider the natural triangulated functor $Q\colon \SW\C \to \SW{\R_{\S}\C}$ induced by $\C \to \R_{\S}\C$. Since any $s \in \S$ is a $\S^{-1}$-weak equivalence, we see that $Q(\Omega^{0}s)$ is invertible in $\SW{\R_{\S}\C}$.

We show that $Q$ is the universal triangulated functor that invert $\Omega^0\S \subseteq \SW\C$. Indeed, let $R\colon \SW\C \to (\P, \Omega^{-1})$ be a triangulated functor such that morphisms in $R(\Omega^0\S) \subseteq \Mor{\P}(R(A, 0), R(B, 0))$ are all invertible.

Let $t \in \Mor\C(A, B)$ be a $\S^{-1}$-weak equivalence. Then for any $D \inn \P$, 
	\begin{equation}
	\H\colon \SW\C \to \Ab, \quad (A, n) \mapsto \Mor\P(D, R(A, n))
	\end{equation}
is a homology theory by \cite[Theorem 2.3.8]{andreasthomE} and every $s \in \S$ is an $\H$-equivalence, hence we see that $t$ too is an $\H$-equivalence. By Yoneda's lemma, $R(\Omega^{0}t)$ is invertible in $\P$. Thus $R$ induces a functor $R_*\colon\Ho{\R_{\S}\C} \to \P$ which is easily seen to intertwine the $\Omega$'s, hence induces a functor $\widehat{R}\colon \SW{\R_{\S}\C} \to \P$. Since $R$ is a triangulated homology theory, $\widehat{R}$ is a triangulated functor and $R = \widehat{R} \circ Q$. The uniqueness of $\widehat{R}$ is clear.
\end{proof}

In other words, $\SW{\R_{\S}\C}$ is the universal triangulated homology theory for which all morphisms of $\S$ are equivalences (cf.\ \cite[Definition 2.3.3]{andreasthomE}).

\section{Applications to the Category of \cast-algebras}\label{sec cast}

Let $\Calg$ denote the category of {\em \cast-algebras} and {\em $*$-homo\-morphisms}.
It is complete and cocomplete and pointed -- the zero object is the zero algebra $0$ -- symmetric monoidal category with respect to the {\em maximal} tensor product, which we denote by $\otimes$ (instead of the more standard notation $\otimes_{\max}$, since we will not consider any other tensor product). We refer to \cite{MR2513331} for the details.

The category $\Calg$ is naturally enriched over $\Top$, the Cartesian closed category of compactly generated weakly Hausdorff topological spaces. Indeed, since \cast-algebras are normed, they are compactly generated and weakly  Hausdorff as spaces, hence there is a forgetgul functor $\Calg \to \Top$. For \cast-algebras $A$ and $B$, we give $\Mor\Calg(A, B)$ the subspace topology from $\Mor\Top(A, B)$ via the forgetful functor. It is easy to see that $\Mor\Calg(A, B)$ is a closed subspace of $\Mor\Top(A, B)$, hence itself a compactly generated weakly Hausdorff space. 

Let $\Aalg \subset \Calg$ denote the full subcategory of {\em abelian} \cast-algebras. By the Gelfand-Naimark duality, $\Aalg$ is equivalent to the opposite category of the category $\CH_\pt$ of pointed, compact Hausdorff topological spaces and pointed continuous maps. If $X$ is a compact Hausdorff space, we write $C(X)$ for the (unital) \cast-algebra of continuous functions on $X$. If in addition $X$ has a base point, we write $C_0(X)$ for the \cast-algebra of continuous functions on $X$ vanishing at the base point. 

\begin{rem} The category $\Calg$ of \cast-algebras is also enriched over the category of Hausdorff spaces, using the compact-open topology on morphism spaces. However, in order to facilitate the connection to algebraic topology, we use the compactly generated compact-open topology. Note that if $A$ is separable, then the compact-open topology on $\Mor\Calg(A, B)$ is metrizable, hence compactly generated.
\end{rem}

\begin{lem}\label{lem B(X)} Let $B$ be a \cast-algebra and let $X$ be a compact Hausdorff space. Then the set of maps $\Mor\Top(X, B)$ is naturally a \cast-algebra isomorphic to $C(X) \otimes B$.
\end{lem}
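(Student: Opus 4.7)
The plan is to equip $\Mor_\Top(X,B)$ (which I will denote $C(X,B)$) with the pointwise $*$-algebra structure inherited from $B$ together with the supremum norm $\|f\| \coloneqq \sup_{x\in X}\|f(x)\|_B$, finite by compactness of $X$. Completeness with respect to this norm is standard: a Cauchy sequence is pointwise Cauchy in $B$, hence converges pointwise, and the uniform limit of continuous functions is continuous. The C*-identity $\|f^*f\| = \|f\|^2$ reduces to the pointwise C*-identity in $B$ after taking suprema.

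To identify this C*-algebra with $C(X)\otimes B$, define $\Phi\colon C(X)\odot B \to C(X,B)$ on the algebraic tensor product by $\Phi(f\otimes b)(x) \coloneqq f(x)b$, extended bilinearly. A direct check shows that $\Phi$ is a $*$-algebra homomorphism, so by the universal property of the maximal tensor product it extends to a $*$-homomorphism $\overline{\Phi}\colon C(X)\otimes B \to C(X,B)$.

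To show $\overline{\Phi}$ is an isomorphism, one establishes density of the image and injectivity. For density, given $f\in C(X,B)$ and $\epsilon>0$, use uniform continuity to choose a finite open cover $\{U_i\}$ of $X$ with points $x_i\in U_i$ such that $\|f(x)-f(x_i)\|<\epsilon$ for all $x\in U_i$, and take a subordinate partition of unity $\{\phi_i\}\subset C(X)$; then $\Phi\bigl(\sum_i\phi_i\otimes f(x_i)\bigr)$ is within $\epsilon$ of $f$ in sup norm. Since the image of a $*$-homomorphism between C*-algebras is closed, density implies surjectivity. For injectivity, for each $x\in X$ the evaluation $\mathrm{ev}_x\colon C(X,B)\to B$ is a $*$-homomorphism, and $\mathrm{ev}_x\circ\overline{\Phi}$ coincides with the slice map $\mathrm{ev}_x\otimes\mathrm{id}_B$; since $C(X)$ is nuclear (being abelian) and evaluations separate elements of $C(X)\otimes B$, we conclude $\ker\overline{\Phi}=0$. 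Naturality in both variables is manifest from the pointwise construction.

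The main technical step is the injectivity of $\overline{\Phi}$, which invokes nuclearity of $C(X)$ to ensure that the maximal and minimal C*-tensor norms agree and that the family of slice maps $\mathrm{ev}_x\otimes\mathrm{id}_B$ is separating; the density argument via partition of unity and the algebraic verifications are routine.
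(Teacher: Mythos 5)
Your construction of the \cast-algebra structure and the identification with $C(X)\otimes B$ is correct, and it in effect supplies the details that the paper delegates to a citation (Wegge--Olsen, Corollary T.6.17): pointwise operations and the sup norm, completeness, the partition-of-unity density argument, closedness of the range of a $*$-homomorphism, and injectivity via nuclearity of $C(X)$ together with the separating family of slice maps $\mathrm{ev}_x\otimes\mathrm{id}_B$ are all in order. (One cosmetic point: ``uniform continuity'' is not literally available for non-metrizable $X$, but the finite-cover-plus-partition-of-unity argument you describe is exactly the correct substitute.)

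There is, however, one step you skip, and it is the only point the paper's proof makes explicit. The object $\Mor\Top(X,B)$ is not defined as ``the set of continuous maps with the sup norm''; it is the mapping space in the category $\Top$ of compactly generated weakly Hausdorff spaces, so it carries the compactly generated compact-open topology. To conclude that $\Mor\Top(X,B)$ \emph{is} a \cast-algebra --- rather than merely that its underlying set admits one --- you must verify that this mapping-space topology coincides with the topology of the norm $\|f\|=\sup_{x\in X}\|f(x)\|_B$. This is what the paper's appeal to \cite[Proposition 2.13]{cgwh} provides; concretely, for $X$ compact and $B$ metric the compact-open topology on the space of continuous maps is the topology of uniform convergence, which is metrizable and hence already compactly generated, so the $k$-ification changes nothing. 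The step is routine, but without it the lemma does not interface with the $\Top$-enrichment it is used for in the sequel (e.g.\ in Lemma \ref{lem enrichment} and Corollary \ref{cor pullback}), so it should be stated.
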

\begin{proof} By \cite[Proposition 2.13]{cgwh} the topology on $\Mor\Top(X, B)$ coincides with the topology given by the norm $||f|| \coloneqq \sup_{x \in X}||f(x)||_B$. The rest is standard (cf.\ \cite[Corollary T.6.17]{MR1222415}). 
\end{proof}

\begin{notation}\label{notate BX} Let $B$ be a \cast-algebra and let $X$ be a compact Hausdorff space. We write $B^{X}$ for the \cast-algebra $\Mor\Top(X, B) \cong C(X) \otimes B$. For $x \in X$, the {\em evaluation} map $f \mapsto f(x)$ is denoted $\ev_{x}\colon B^{X} \to B$.
\end{notation}

The following is the main property of the enrichment that we use. See also \cite[Proposition 3.4]{MR2222510} and \cite[Proposition 24]{MR2513331}.
\begin{lem}\label{lem enrichment} Let $A$ and $B$ be \cast-algebras and let $X$ be a compact Hausdorff space. Then there is an identification
	\begin{equation}\label{eq:left adjoint of C(X)}
	\Mor{\Top}(X, \Mor{\Calg}(A, B)) \cong \Mor{\Calg}(A, B^{X})
	\end{equation}
natural in $A$, $B$ and $X$.	 
\end{lem}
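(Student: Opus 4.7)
The plan is to combine the Cartesian closed structure on $\Top$ with the pointwise nature of the algebraic operations on $C(X)\otimes B$ identified in Lemma~\ref{lem B(X)}. First, I would forget the $*$-algebra structure and work entirely in $\Top$: since $\Top$ is Cartesian closed, for any cgwh spaces $X, Y, Z$ the exponential law gives a natural homeomorphism
\begin{equation*}
\Mor{\Top}(X, \Mor{\Top}(Y, Z)) \;\cong\; \Mor{\Top}(X\times Y, Z) \;\cong\; \Mor{\Top}(Y, \Mor{\Top}(X, Z)).
\end{equation*}
Applying this with $Y = A$ and $Z = B$ (viewed as objects of $\Top$ via the forgetful functor $\Calg \to \Top$), and using Lemma~\ref{lem B(X)} to identify $C(X)\otimes B$ with $\Mor{\Top}(X, B)$ as topological spaces, I obtain a natural bijection
\begin{equation*}
\Phi \colon \Mor{\Top}(X, \Mor{\Top}(A, B)) \;\xrightarrow{\;\sim\;}\; \Mor{\Top}(A,\, C(X)\otimes B),
\end{equation*}
which is explicitly the transposition $\Phi(\varphi)(a)(x) = \varphi(x)(a)$.

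Next, I would show that $\Phi$ restricts to the desired bijection on the subsets of $*$-homomorphisms. Since the enrichment of $\Calg$ over $\Top$ is defined via the subspace topology on $\Mor{\Calg}(A,B) \subseteq \Mor{\Top}(A,B)$, a map $\varphi \in \Mor{\Top}(X, \Mor{\Top}(A,B))$ factors through $\Mor{\Calg}(A,B)$ precisely when $\varphi(x) \in \Mor{\Calg}(A,B)$ for every $x \in X$. On the other side, $\Phi(\varphi) \colon A \to C(X)\otimes B$ is a $*$-homomorphism if and only if, for every $x \in X$, the composite $\mathrm{ev}_x \circ \Phi(\varphi) \colon A \to B$ is a $*$-homomorphism; this is the ``pointwise'' characterization, and it is valid because Lemma~\ref{lem B(X)} identifies $C(X)\otimes B$ with $\Mor{\Top}(X,B)$ whose $*$-algebra operations are pointwise, so that the family of evaluations $\{\mathrm{ev}_x\}_{x\in X}$ jointly detects the $*$-algebra identities. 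Since $\mathrm{ev}_x(\Phi(\varphi)(a)) = \varphi(x)(a)$, these two conditions match exactly. Naturality in $A$, $B$, and $X$ follows immediately from the naturality of the exponential law and of the identification in Lemma~\ref{lem B(X)}.

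The main (and essentially only nontrivial) point is the pointwise detection of the $*$-homomorphism condition on $\Phi(\varphi)$, but this is a direct consequence of Lemma~\ref{lem B(X)}, so once the topological adjunction is in place the argument is essentially automatic.
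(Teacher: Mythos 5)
Your proof is correct and follows essentially the same route as the paper: the exponential law for compactly generated weakly Hausdorff spaces combined with Lemma~\ref{lem B(X)}, followed by the observation that the bijection restricts to $*$-homomorphisms because the operations on $C(X)\otimes B \cong \Mor{\Top}(X,B)$ are detected pointwise by the evaluations. The paper leaves that last restriction step as ``easy to check,'' which you have simply spelled out.
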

\begin{proof} Since $A$ and $B$ are compactly generated weakly Hausdorff spaces, we have a natural identification
	\begin{equation}
	\Mor{\Top}(X, \Mor{\Top}(A, B)) \cong \Mor{\Top}(A, \Mor{\Top}(X, B)),
	\end{equation}
by \cite[Proposition 2.12]{cgwh}. Hence by Lemma~\ref{lem B(X)} 
	\begin{equation}
	\Mor{\Top}(X, \Mor{\Top}(A, B)) \cong \Mor{\Top}(A, B^{X}).
	\end{equation}
Now it is easy to check that this restricts to the identification in (\ref{eq:left adjoint of C(X)}).

\end{proof}

Often we will make this identification implicitly. 

\begin{rem}\label{rem pointed enr} Lemma~\ref{lem B(X)} and Lemma~\ref{lem enrichment} have pointed analogues.

Let $\Top_{*}$ denote the category of pointed spaces and pointed maps. Since \cast-algebras have a natural base point $0$ and $*$-homomorphisms are pointed maps, there is in fact a forgetful functor $\Calg \to \Top_{*}$ and $\Calg$ is enriched over $\Top_{*}$.
 
Let $A$ and $B$ are \cast-algebras and let $X$ be a pointed compact Hausdorff space. Let $B^{X}$ denote the \cast-algebra $C_{0}(X) \otimes B \cong \Mor\Top_{*}(X, B)$. Then it follows from Lemma~\ref{lem enrichment} that there is a natural identification 
	\begin{equation}\label{eq:left adjoint of C0(X)}
	\Mor{\Top_{*}}(X, \Mor{\Calg}(A, B)) \cong \Mor{\Calg}(A, B^{X}).
	\end{equation}
\end{rem}

\begin{cor}\label{cor pullback} For any $D \inn \Calg$, the functor $\Mor\Calg(D, -)\colon \Calg \to \Top$ preserves pullbacks.
\end{cor}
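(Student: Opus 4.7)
The plan is to factor $\Mor\Calg(D,-)$ as $\Mor\Top(UD,-) \circ U$, where $U \colon \Calg \to \Top$ is the forgetful functor, and observe that each factor preserves pullbacks. The only real work is then to compare the resulting subspaces.

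First, I would check that $U$ preserves pullbacks. Given $f\colon A \to C$ and $g\colon B \to C$ in $\Calg$, the pullback can be realised concretely as the \cast-subalgebra $A \times_C B = \{(a,b) \in A \oplus B : f(a) = g(b)\}$ of the direct sum, with componentwise operations and the restriction of the max norm. Its underlying topology is therefore the subspace topology from $UA \times UB$, which coincides with the pullback in $\Top$. On the other side, $\Mor\Top(UD,-)$ is right adjoint to $- \times UD$ in the Cartesian closed category $\Top$, hence preserves all limits; so the composite $X \mapsto \Mor\Top(UD, UX)$ carries the pullback square in $\Calg$ to a pullback square in $\Top$.

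It remains to identify $\Mor\Calg(D, A \times_C B)$ with $\Mor\Calg(D, A) \times_{\Mor\Calg(D, C)} \Mor\Calg(D, B)$ as subspaces of $\Mor\Top(UD, UA) \times \Mor\Top(UD, UB)$. Both embed there via the closed inclusion $\Mor\Calg \hookrightarrow \Mor\Top$ coming from the enrichment. They coincide as subsets, since a continuous map $D \to A \oplus B$ factors through $A \times_C B$ as a $*$-homomorphism iff both components are $*$-homomorphisms with matching images in $C$. Since both carry the subspace topology from the same ambient product, they are equal as topological spaces. There is no genuine obstacle here; the only point of substance is the concrete description of the pullback in step one, which justifies that $U$ truly preserves pullbacks rather than just bijections of underlying sets.
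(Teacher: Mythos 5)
Your proof is correct, but it follows a genuinely different route from the one in the paper. The paper's proof first notes that the comparison map $\Phi\colon \Mor\Calg(D, A\times_B E) \to \Mor\Calg(D,A)\times_{\Mor\Calg(D,B)}\Mor\Calg(D,E)$ is a continuous bijection, and then establishes that it is a homeomorphism by testing continuity against maps from compact Hausdorff spaces $X$, invoking the enrichment adjunction $\Mor\Top(X, \Mor\Calg(A,B)) \cong \Mor\Calg(A, C(X)\otimes B)$ of Lemma~\ref{lem enrichment} (which implicitly also uses that $C(X)\otimes -$ preserves the pullback). You instead avoid the enrichment lemma entirely: you use the concrete realization of the pullback as a closed $*$-subalgebra of $A\oplus B$ to see that the forgetful functor preserves pullbacks, then cartesian closedness of $\Top$ to see that $\Mor\Top(UD,-)$ preserves them, and finally identify the two candidate spaces as one and the same subspace of $\Mor\Top(UD, UA\times UB)$. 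This is more elementary and self-contained; its only delicate points are the ones you flag, namely that the norm topology on the pullback agrees with the $\Top$-pullback topology (true, since \cast-algebras are metrizable, so the ordinary product is already compactly generated) and that all the inclusions involved are closed, so the compactly generated subspace topologies behave as expected. The paper's argument is shorter on the page because it leans on machinery (Lemma~\ref{lem B(X)} and Lemma~\ref{lem enrichment}) that is set up anyway for later use; yours would stand on its own without that apparatus. Either proof is acceptable.
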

\begin{proof} Let $D$ be fixed and let $F \coloneqq \Mor\Calg(D, -)$.

Consider a pullback diagram 
	\begin{equation}
	\xymatrix{
	A \times_B E \ar[r] \ar[d] & E \ar[d]\\
	A \ar[r] & B
	}
	\end{equation}
in $\Calg$. We need to prove that the natural map 
	\begin{equation}
	\Phi\colon F(A \times_B E) \to F(A) \times_{F(B)} F(E)
	\end{equation}
is a homeomorphism. It is clear that $\Phi$ is a continuous bijection. Hence it suffices to prove that for any $X$ compact Hausdorff, a map $X \to F(A \times_B E)$ is continuous if the compositions $X \to F(A)$ and $X \to F(E)$ are continuous. However, this follows from Lemma~\ref{lem enrichment} and its proof.
\end{proof}

\subsection{Ordinary Homotopy Theory}

\begin{notation} We denote the interval $[0, 1] \coloneqq \{x \in \R \mid 0 \le x \le 1\}$ by $I$.
\end{notation}

\begin{defn}\label{defn homot} Let $A$ and $B$ be \cast-algebras. Two $\ast$-homomorphisms $f_{0}$, $f_{1} \colon A \to B$ are said to be {\em homotopic} if there exists a $\ast$-homomorphism $F \colon A \to B^{I}$, called a homotopy, such that $f_{0} = \ev_{0} \circ F$ and $f_{1} = \ev_{1} \circ F$, where $\ev_{t} \colon B^{I} \to B$ is the evaluation map at $t \in [0, 1]$. We denote the set of homotopy classes of $*$-homomorphisms $A \to B$ by 
	\begin{equation}
	[A, B] \coloneqq \{\text{homotopy classes of maps $A \to B$}\}.
	\end{equation}
\end{defn}

\begin{rem}\label{rem homotopy} By Lemma~\ref{lem enrichment}, two $*$-homomorphisms $f_{0}$, $f_{1} \colon A \to B$ are homotopic if and only if $\pi_{0}(f_{0}) = \pi_{0}(f_{1})$ in $\pi_{0}(\Mor\Calg(A, B))$, where $\pi_{0}$ is the path-connected components functor.
\end{rem}

The (ordinary) {\em homotopy category} of \cast-algebras is the category of \cast-algebras and homotopy classes of $*$-homomorphisms. In view of Remark~\ref{rem homotopy}, we denote this category $\pi_{0}\Calg$:
	\begin{equation}
	\Mor{\pi_{0}\Calg}(A, B) \coloneqq \pi_{0}\Mor\Calg(A, B) = [A, B]
	\end{equation}

 We have a natural functor $\pi_{0} \colon \Calg \to \pi_{0}\Calg$.

We now give $\Calg$ the structure of a category of fibrant objects, whose homotopy category is $\pi_{0}\Calg$, following \cite{MR757510}. We consider $\Top$ as a category of fibrant objects using weak homotopy equivalences and Serre fibrations (see Example~\ref{ex model}) and we ``pullback'' this structure to $\Calg$ using Corollary~\ref{cor pullback}. 	  

\begin{defn}\label{defn homotopy equivalence}
A $*$-homomorphism $t \in \Calg$ is called a {\em homotopy equivalence} if $\pi_{0}(t)$ is invertible in $\pi_{0}\Calg$.
\end{defn}

\begin{lem}\label{lem homotopic otimes f} Let $F \inn \Calg$. If $f_{0}$, $f_{1} \in \Mor\Calg(A, B)$ are homotopic, then the maps $f_{0} \otimes \id_{F}$, $f_{1} \otimes \id_{F}\in \Mor\Calg(A \otimes F, B \otimes F)$ are homotopic. In particular, the functor $A \mapsto A \otimes F$ preserves homotopy equivalences.
\end{lem}
\begin{proof} Clear.
\end{proof}

\begin{lem}\label{lem homotopic} If $f_{0}$, $f_{1} \in \Mor\Calg(A, B)$ are homotopic then for any $D \inn \Calg$, the induced maps $(f_{0})_{*}$, $(f_{1})_{*}\colon \Mor\Calg(D, A) \to \Mor\Calg(D, B)$ are homotopic in $\Top$. 
\end{lem}
\begin{proof} Follows from Lemma~\ref{lem enrichment}.
\end{proof}

\begin{prop}\label{prop he} Let $t \in \Mor\Calg(A, B)$. Then $t$ is a homotopy equivalence if and only if the induced map 
	\begin{equation}
	t_{*}\colon \Mor\Calg(D, A) \to \Mor\Calg(D, B)
	\end{equation}
is a weak homotopy equivalence in $\Top$ for all $D \inn \Calg$.
\end{prop}
\begin{proof} If $t \in \Mor\Calg(A, B)$ is a homotopy equivalence, then for any $D \inn \Calg$, the induced map $t_{*} \colon \Mor\Calg(D, A) \to \Mor\Calg(D, B)$ is a homotopy equivalence by Lemma~\ref{lem homotopic}, hence a weak homotopy equivalence. Conversely, suppose that the induced map $t_{*}\colon \Mor\Calg(D, A) \to \Mor\Calg(D, B)$ is a weak homotopy equivalence in $\Top$ for all $D \inn \Calg$. Then in particular, $\pi_{0}(t)_{*} = \pi_{0}(t_{*}) \colon \pi_{0}\Mor\Calg(D, A) \to \pi_{0}\Mor\Calg(D, B)$ is a bijection for all $D \inn \Calg$. By Yoneda's Lemma, $\pi_{0}(t)$ is invertible.
\end{proof}

%
%
%
%
%
\begin{defn}\label{defn fibration} A $*$-homomorphism $p\colon E \to B$ is called a {\em Schochet fibration} if the induced map
	\begin{equation}
	p_{*}\colon \Mor\Calg(D, E) \to \Mor\Calg(D, B)
	\end{equation}
has the path lifting property (i.e.\ the right lifting property with respect to the inclusion $\{0\} \hookrightarrow [0, 1]$) in $\Top$ for all $D \inn \Calg$.
\end{defn}

\begin{defn} Let $f \colon A \to B$ be a $*$-homomorphism. Let $Nf$ denote the pullback
	\begin{equation}
	\xymatrix{NF \ar[r] \ar[d] &B^{I} \ar[d]^{\ev_{0}}\\
	A \ar[r]^{f} & B 
	}
	\end{equation}
\end{defn}

\begin{lem}\label{lem fib splitting} A $*$-homomorphism $p \colon E \to B$ is a Schochet fibration if and only if the natural map $E^{I} \to Np$ splits.
\end{lem}
\begin{proof} See \cite[Proposition 1.10]{MR757510}.
\end{proof}

\begin{lem}\label{lem fib otimes f} For any $F \inn \Calg$, the functor $A \mapsto A \otimes F$ preserves pullbacks and Schochet fibrations.
\end{lem}
\begin{proof} The functor $A \mapsto A \otimes F$ preserves pullbacks by \cite[Remark 3.10]{MR1716199} and it preserves Schochet fibrations by Lemma~\ref{lem fib splitting}. See \cite[Proposition 1.11]{MR757510}.
\end{proof}

\begin{prop}\label{prop sch fib} Let $p \in \Mor\Calg(E, B)$. Then $p$ is a Schochet fibration if and only if the induced map
	\begin{equation}\label{eq ind map fib}
	p_{*}\colon \Mor\Calg(D, E) \to \Mor\Calg(D, B)
	\end{equation}
is a Serre fibration (i.e.\ has the right lifting property with respect to the natural inclusion $\{0\} \times I^{n} \hookrightarrow [0, 1] \times I^{n}$ for all $n \ge 0$) in $\Top$ for all $D \inn \Calg$. 
\end{prop}
\begin{proof} Clearly, Serre fibrations have the path lifting property. Hence it is enough to show that if $p$ is a Schochet fibration then $p_{*}$ is a Serre fibration. For any compact Hausdorff space $X$, by Lemma~\ref{lem enrichment}, the map $p_{*}\colon \Mor\Calg(D, E) \to \Mor\Calg(D, B)$ has the right lifting property with respect to $\{0\} \times X \hookrightarrow [0, 1] \times X$ if and only if the map $(\id_{C(X)} \otimes p)_{*} \colon \Mor\Calg(D, C(X) \otimes E)  \to \Mor\Calg(D, C(X) \otimes B)$ has the path lifting property. Now the proof is complete by Lemma~\ref{lem fib otimes f}.
\end{proof}

The following theorem is contained in \cite{MR757510}. 
\begin{thm}\label{thm ord homotopy} The category of \cast-algebras $\Calg$ is a pointed category of fibrant objects with weak equivalences the homotopy equivalences and fibrations the Schochet fibrations, whose homotopy category is the ordinary homotopy category i.e.\ $\Ho\Calg = \pi_{0}\Calg$.
\end{thm}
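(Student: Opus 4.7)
The plan is to transport the $\pi_0$-structure on $\Top$ established in Proposition~\ref{prop pi0} to $\Calg$ via the family of representable functors $\Mor\Calg(D, -)\colon \Calg \to \Top$. Definitions~\ref{defn homotopy equivalence} and~\ref{defn fibration} are tailored so that a $*$-homomorphism is a homotopy equivalence (resp.\ a Schochet fibration) precisely when $\Mor\Calg(D,-)$ sends it to a $\pi_0$-equivalence (resp.\ a $\pi_0$-fibration) in $\Top$ for every $D \inn \Calg$. The two essential tools are Corollary~\ref{cor pullback} (these representables preserve pullbacks) and Lemma~\ref{lem enrichment} (the natural identification $\Mor\Calg(D, C(X)\otimes B) \cong \Mor\Top(X, \Mor\Calg(D,B))$). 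Note that $\Calg$ is pointed since the zero algebra is both initial and terminal.

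Verifying (F0)--(F3), (W1), (W2), and (FW1) is then largely bookkeeping: each axiom is a statement about the existence or preservation of a lifting, composition, or pullback diagram, and applying any $\Mor\Calg(D,-)$ converts it into a statement already handled in Proposition~\ref{prop pi0}. For (F2) and (FW1), Corollary~\ref{cor pullback} ensures that a pullback in $\Calg$ becomes a pullback in $\Top$, so preservation of Schochet fibrations (resp.\ trivial Schochet fibrations) under pullback reduces to the corresponding $\pi_0$-assertion in $\Top$; the composition and isomorphism axioms for $\F$ and $\W$ are similarly immediate.

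The substantive axiom is (FW2). I would take $B^I \coloneqq C[0,1] \otimes B$ as path-object, with $s\colon B \to B^I$ the constant-path embedding $b \mapsto (t \mapsto b)$ and $(d_0,d_1)\colon B^I \to B \times B$ the evaluation at the two endpoints. Applying $\Mor\Calg(D,-)$ and invoking Lemma~\ref{lem enrichment} identifies this factorization of the diagonal with the factorization $\Mor\Calg(D,B) \to \Mor\Top([0,1], \Mor\Calg(D,B)) \to \Mor\Calg(D,B) \times \Mor\Calg(D,B)$ shown to yield a path-object in Proposition~\ref{prop pi0}. Hence $s$ is a homotopy equivalence and $(d_0,d_1)$ is a Schochet fibration. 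Verifying the latter uniformly in $D$ is the step that requires the most care, and it is exactly where Lemma~\ref{lem enrichment} does the essential work.

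For the final claim $\Ho\Calg = \pi_0\Calg$: by Remark~\ref{rem Yoneda he} (Yoneda), the homotopy equivalences are exactly the $*$-homomorphisms that become invertible in $\pi_0\Calg$, so the quotient functor $\Calg \to \pi_0\Calg$ inverts $\W$ and factors uniquely as $\Calg \to \Ho\Calg \to \pi_0\Calg$. Conversely, Lemma~\ref{lem enrichment} applied to the path-object $C[0,1]\otimes B$ shows that a Brown right-homotopy $h\colon A \to B^I$ between two $*$-homomorphisms $f_0, f_1\colon A \to B$ is the same datum as a continuous path $[0,1] \to \Mor\Calg(A,B)$ from $f_0$ to $f_1$; hence the Brown right-homotopy relation on $\Mor\Calg(A,B)$ coincides with $\pi_0$-equivalence. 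Combined with the colimit formula in Brown's theorem, and the observation that already in $\pi_0\Calg$ every $A' \we A$ is inverted (so the colimit over right-fractions stabilizes), this collapses to $\MorHo{\Calg}{A,B} \cong \pi_0 \Mor\Calg(A,B)$, providing the inverse functor $\pi_0\Calg \to \Ho\Calg$ and completing the identification.
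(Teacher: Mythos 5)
Your proposal is correct and follows essentially the same route as the paper's proof: reduce the axioms to Proposition~\ref{prop pi0} via the representable functors $\Mor\Calg(D,-)$, using Corollary~\ref{cor pullback} for (F2) and (FW1) and Lemma~\ref{lem enrichment} with the path-object $C[0,1]\otimes B$ for (FW2), and then identify $\Ho\Calg$ with $\pi_0\Calg$ by observing that Brown's (right-)homotopy relation coincides with path-connectedness in $\Mor\Calg(A,B)$ and that weak equivalences are already invertible in $\pi_0\Calg$ by Yoneda.
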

\begin{proof} 
Properties (F0), (F1), (F3) follow from Proposition~\ref{prop sch fib}. Properties (W1) and (W2) are clear (or use Proposition~\ref{prop he}). For properties (F2) and (FW1), use Corollary~\ref{cor pullback} in addition.

For (FW2): Let $[a, b]$ be a compact interval, $a < b$,  and let 
	\begin{equation}
	B^{[a, b]} \coloneqq \Mor{\Top}([a, b], B) \cong C[a, b] \otimes B.	
	\end{equation}
and let $\ev_{t}\colon B^{[a, b]} \to B$ denote the evaluation at $t \in [a, b]$ (cf.\ Notation~\ref{notate BX}). Then the map $(\ev_{a}, \ev_{b})\colon B^{[a, b]} \to B\times B$ is a Schochet fibration, since the rectangle $[0, 1]\times [a, b]$ retracts to the union of its three sides $\sqsubset$. The constant-path map $s\colon B \to B^{[a, b]}$ is a homotopy equivalence with homotopy inverse $\ev_{a}$. Thus $(B^{[a, b]}, s, \ev_a, \ev_b)$ is a path-object for $B$. For fixed $a$ and $b$, this is functorial.

It follows from the construction of the path-object in $\Calg$ that two $*$-homo\-morphisms are right-homotopic if and only if they are homotopic in the sense of Definition~\ref{defn homot} and this happens if and only if they are homotopic in the sense of Definition~\ref{Brown homotopy}.
Hence 
	\begin{equation}
	\Ho{\Calg} = \pi\Calg = \pi_0\Calg.
	\end{equation}
\end{proof}

Note that $\Calg$ has a functorial path-object, given by $C[0, 1] \otimes B = B^{I}$, hence also a functorial loop-object $\Omega B \coloneqq C_0(0, 1) \otimes B$.

\begin{rem}\label{rem naming}
Schochet called these maps {\em cofibrations} in \cite{MR757510}, because, under the Gelfand-Naimark duality, the condition in Definition~\ref{defn fibration} for a $*$-homomorphism of abelian algebras corresponds to the homotopy extension property for the corresponding map of (pointed compact Hausdorff) spaces.


In a similar way, it is customary that $\Mor{\Top_{*}}(S^{1}, B) \cong C_{0}(S^{1}) \otimes B \cong C_{0}(0, 1) \otimes B$ is called the {\em suspension} of $B$, since $C_{0}(S^{1}) \otimes C_{0}(X) \cong C_{0}(S^{1} \wedge X)$ for $B = C_{0}(X)$, where $X$ is a pointed compact Hausdorff space. See also Remark \ref{rem abelian algebra}.

However, for the sake of consistency, in this paper we will keep our notations and terminologies compatible with that of Section 1. 
\end{rem}

The stable homotopy category $\SW{\Calg}$ is the {\em suspension-stable homotopy category of \cast-algebras} studied by Rosenberg \cite{MR658514} and Schochet \cite{MR757510}. 

\begin{rem} Let $\Csep$ denote the category of {\em separable} \cast-algebras. Then considering only $D$ separable in Definitions \ref{defn homotopy equivalence} and \ref{defn fibration}, we get a structure of a category of fibrant objects on $\Csep$.
\end{rem}

\begin{lem}\label{lem Schochet fibration surjective} All Schochet fibrations are surjective. 
\end{lem}
\begin{proof}
Let $p\colon E \fib B$ be a Schochet fibration. Consider the universal algebra generated by a positive contraction:
	\begin{equation}
	C \coloneqq \Calg(x \mid 0 \le x \le 1) = C_0(0, 1].
	\end{equation}
Then for any $b \in B$, $0 \le b \le 1$,  there is a path 
	\begin{equation}
	[0, 1] \ni r \mapsto (x \mapsto rb) \in \Mor\Calg(C, B),
	\end{equation}
which lifts to $0 \in \Mor\Calg(C, E)$ at $r=0$. Lifting the path to $\Mor\Calg(C, E)$, we get $e \in E$, $0 \le e \le 1$, such that $p(e) = b$. It follows that $p$ is surjective.
\end{proof}

\begin{rem}\label{rem prod coprod} The following are well-known and/or easy to see.
\begin{enumerate}
\item\label{item prod coprod} The localization $\Calg \to \Ho\Calg$ preserves arbitrary coproducts and arbitrary products:
	\begin{align}
	\MorHo{\Calg}{\coprod_{i \in \Lambda} A_i, B} &\cong \prod_{i \in \Lambda}\MorHo{\Calg}{A_i, B},\\
	\MorHo{\Calg}{A, \prod_{i \in \Lambda} B_{i}} &\cong \prod_{i \in \Lambda}\MorHo{\Calg}{A, B_{i}}.
	\end{align}
\item The loop functor $\Omega\colon \Ho\Calg \to \Ho\Calg$ preserves finite products:
	\begin{equation}
	\Omega (B_{1} \times B_{2}) \cong \Omega B_{1} \times \Omega B_{2},
	\end{equation}
but not finite coproducts (for example, the natural map $\Omega \Co \coprod \Omega \Co \to \Omega (\Co \coprod \Co)$ is not a homotopy equivalence). 

\item\label{item loop inf-prod} The loop functor $\Omega\colon
  \Ho\Calg \to \Ho\Calg$ does {\em not} preserve infinite products,
and in particular does not admit a left-adjoint; see Appendix~\ref{nomodel}. 


\item The ``stable homotopy functor'' $\Omega^{0}\colon \Ho\Calg \to \SW\Calg$ preserves finite products, but not finite coproducts.
\end{enumerate}

\end{rem}

\subsection{\cast-stable Homotopy Theory}

Let $\Compact$ denote the \cast-algebra of compact operators on a separable infinite-dimensional Hilbert space.

\begin{prop} Defining the weak equivalences to be
	\begin{equation}
	\left\{t \in \Calg \mid t \otimes \id_{\Compact} \text{ is a homotopy equivalence}\right\}
	\end{equation}
and the fibrations to be
	\begin{equation}
	\left\{p \in \Calg \mid p \otimes \id_{\Compact} \text{ is a Schochet fibration}\right\}
	\end{equation}  defines a category of fibrant objects on $\Calg$, denoted $\M$.
\end{prop}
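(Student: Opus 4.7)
The plan is to view the proposed structure as the pullback of the ordinary homotopy theory on $\Calg$ (Theorem~\ref{thm ord homotopy}) along the endofunctor $F \coloneqq (- \otimes \Compact)\colon \Calg \to \Calg$. Since $F$ preserves the zero object, sends isomorphisms to isomorphisms, and (as verified below) commutes with the relevant pullbacks, most axioms reduce mechanically to the corresponding axioms already established for the ordinary structure.

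The key technical step is to show that $F$ preserves pullbacks along the new fibrations. Any such $p\colon E \to B$ has $F(p)$ a Schochet fibration, hence surjective by Proposition~\ref{prop Schochet fibration surjective}; since $\Compact$ is nuclear (so tensoring with $\Compact$ is both exact and faithful), surjectivity of $F(p)$ forces surjectivity of $p$ itself. Consequently, $p$ fits into short exact sequences
\begin{equation*}
0 \to \ker p \to E \to B \to 0, \qquad 0 \to \ker p \to A \times_B E \to A \to 0,
\end{equation*}
which remain exact upon tensoring with $\Compact$; a five-lemma comparison identifies $(A \times_B E) \otimes \Compact$ canonically with $(A \otimes \Compact) \times_{B \otimes \Compact}(E \otimes \Compact)$.

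Armed with this, I would dispatch the axioms as follows. Axioms (F0), (F1), (W1), (W2) and (F3) are immediate: each concerns a property preserved by $F$ and known to hold for the ordinary Schochet/homotopy-equivalence structure, together with $F(0) = 0$. For (F2) and (FW1), given a new (trivial) fibration $p$ and a morphism $f$, the pullback-preservation step yields $F(f^*(p)) \cong F(f)^*(F(p))$, and (trivial) Schochet fibrations are pullback-stable by Theorem~\ref{thm ord homotopy}. For (FW2), I take the new path-object of $B$ to be the same $B^I \coloneqq C[0,1] \otimes B$ as in the ordinary structure; applying $F$ yields $C[0,1] \otimes (B \otimes \Compact)$, which is the ordinary path-object of $B \otimes \Compact$, so $s \otimes \id_\Compact$ is a homotopy equivalence and $(d_0, d_1) \otimes \id_\Compact$ is a Schochet fibration by construction of $\M$.

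The main obstacle is the pullback-preservation step, which crucially uses that $\Compact$ is nuclear (so that the maximal tensor product agrees with the minimal one and is both exact and faithful). Once that is in place, the remainder is a routine translation of Theorem~\ref{thm ord homotopy} through $F$.
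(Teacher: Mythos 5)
Your proposal is correct and follows essentially the same route as the paper, whose entire proof is the one-line observation that $-\otimes \id_{\Compact}$ preserves pullbacks, so that the structure of Theorem~\ref{thm ord homotopy} transports along this functor. Your treatment is simply more explicit: you justify the pullback-preservation (only where it is needed, namely along the new fibrations, via surjectivity of Schochet fibrations, exactness of $-\otimes\Compact$, and the five lemma), a point the paper takes for granted.
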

\begin{proof} 
This is clear since $-\otimes \id_{\Compact}$ preserves pullbacks.
\end{proof}

Let $e_{11}\colon \Co \to \Compact$ denote a rank-one projection. Then 
for any $B \inn \M$, the morphism $\id_{B} \otimes e_{11}$ is a weak equivalence in $\M$. It follows that $\Ho{\M}$ is the ``monoidal" localization $\Ho\Calg[ \otimes e_{11}^{-1}]$: 
	\begin{align}
	\MorHo{\M}{A, B} &\cong \MorHo{\Calg}{A, B \otimes \Compact}\\ 
	&\cong \MorHo{\Calg}{A \otimes \Compact, B \otimes \Compact}.
	\end{align}
In the notation of \cite{MR1068250}, the categories $\Ho{\M}$ and $\SW{\M}$ are the not necessarily separable versions of $\cat{TH}$ and $\cat{TS}$ respectively. When restricted to the abelian algebras, $\SW{\M}$ gives the $kk$ groups of D{\u{a}}d{\u{a}}rlat-McClure \cite{MR1800209}.

\subsection{Topological $K$-Theory}\label{subsec K}



Taking $\H$ to be topological $K$-theory in Theorem~\ref{thm fibre homology fibrant object}, we get a category $\K = R_{K}\Calg$ of fibrant objects whose weak equivalences are $K$-equi{\-}valences and fibrations are Schochet fibrations. Compare \cite{MR2222510} and \cite{MR2193334}. It follows from Theorem~\ref{thm UCT}, that $\Ho\K$ has small hom sets. 

Let $\Compact$ be the algebra of compact operators on a separable Hilbert space and let $e_{11}\colon \Co \to \Compact$ denote a rank-one projection. Then 
	\begin{equation}\label{eq otimes e}
	\id_{A} \otimes e_{11}\colon A \to A \otimes \Compact
	\end{equation}
is a $K$-equivalence. We also have a natural isomorphism $\Omega^{2} A \to A \otimes \Compact$ in $\Ho\K$, since Bott periodicity can be implemented by a boundary map associated to a Toeplitz type extension.  It follows that 
	\begin{equation}
	\Omega\colon \MorHo{\K}{A, B} \to \MorHo{\K}{\Omega A, \Omega B}
	\end{equation}
is invertible. Hence $\K$ is stable and the natural functor $\Ho\K \to \SW\K$ is an equivalence of categories. In particular, $\Ho\K$ is a triangulated category in a natural way, and $\SW\Calg \to \Ho\K$ is a triangulated functor.

%
%
%

The following is a version of the Universal Coefficient Theorem of Rosenberg and Schochet (cf.\ \cite{MR894590}). It can be deduced from results in \cite{MR2193334}, but we give a self-contained proof.
\begin{thm}\label{thm UCT} 
For $B \inn \K$, we have
	\begin{align}
	\label{HoK C}\MorHo{\K}{\Co, B} &\cong K_0(B).
	\end{align}
More generally, for $A$, $B \inn \K$, there is a natural short exact sequence
	\begin{equation}\label{eq UCT}
	\xymatrix{
	\Ext(K_{*+1}(A), K_*(B)) \ar@{>->}[r] & \MorHo{\K}{A, B} \ar@{->>}[r] & \Hom(K_*(A), K_*(B))
	},
	\end{equation}
where
	\begin{align}
	\Hom(K_*(A), K_*(B)) &\coloneqq \bigoplus_{i = 0, 1} \Hom_\Z(K_i(A), K_i(B)) \quad\text{and}\\
	\Ext(K_{*-1}(A), K_*(B)) &\coloneqq \bigoplus_{i = 0, 1} \Ext^1_\Z(K_{i-1}(A), K_i(B)).
	\end{align}
\end{thm}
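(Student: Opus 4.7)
\smallskip

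\noindent\textbf{Proof plan.} The plan is to first establish the representability statement $\MorHo{\K}{\Co, B} \cong K_0(B)$ (together with its shifted analogue $\MorHo{\K}{\Omega\Co, B} \cong K_1(B)$) and then bootstrap to the full UCT via geometric free resolutions and standard triangulated-category homological algebra.

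For representability, I would exploit that $\MorHo{\K}{\Co, B}$ is naturally an abelian group, since $\Ho\K$ is stable and hence triangulated. The $K$-equivalence $\id_B \otimes e_{11}\colon B \to B \otimes \Compact$ of \eqref{eq otimes e} identifies $\MorHo{\K}{\Co, B}$ with $\MorHo{\K}{\Co, B \otimes \Compact}$, and a $*$-homomorphism $\Co \to B \otimes \Compact$ is exactly a projection in $B \otimes \Compact$. This yields a commutative monoid map $\pi_0\mathrm{Proj}(B \otimes \Compact) \to \MorHo{\K}{\Co, B}$, the monoid structure on the left coming from orthogonal direct sums via $M_2(\Compact) \cong \Compact$; since the target is already a group, this factors through the Grothendieck completion $K_0(B)$. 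Conversely, because $K_0$ inverts $K$-equivalences by the very construction of $\K$, Theorem~\ref{thm fibre homology fibrant object} produces a factorization $K_0\colon \Ho\K \to \Ab$, and evaluating a right-fraction $\Co \xleftarrow{\sim} A' \to B$ at the generator $1 \in K_0(\Co) \cong K_0(A')$ furnishes the inverse. The shifted identity $\MorHo{\K}{\Omega\Co, B} \cong K_1(B)$ then follows using Bott periodicity.

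For the UCT, I would construct a geometric free resolution of $A$. Choose a surjection $F_* \twoheadrightarrow K_*(A)$ from a graded free abelian group and, via representability, realize it as a morphism $P_0 \to A$ in $\Ho\K$, where $P_0$ is a coproduct of copies of $\Co$ and $\Omega\Co$ with $K_*(P_0) = F_*$; here Remark~\ref{rem prod coprod}(\ref{item prod coprod}) ensures that coproducts behave correctly. Completing to a distinguished triangle $P_1 \to P_0 \to A \to \Sigma P_1$ and taking the $K$-theory long exact sequence shows that $K_*(P_1)$ is the kernel of $F_* \twoheadrightarrow K_*(A)$, and thus again free. A second application of representability produces a $K$-equivalence between $P_1$ and a further coproduct of copies of $\Co$ and $\Omega\Co$, so $P_1$ is also geometric.

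Applying $\MorHo{\K}{-, B}$ to the triangle and identifying $\MorHo{\K}{P_i, B}$ and $\MorHo{\K}{\Sigma P_i, B}$ with the appropriate (graded) Hom groups via representability yields a long exact sequence which, combined with the Hom--Ext four-term sequence attached to the free resolution $0 \to K_*(P_1) \to K_*(P_0) \to K_*(A) \to 0$, collapses to the desired short exact sequence \eqref{eq UCT}. The principal obstacle is representability itself: once $\Co$ is shown to represent $K_0$ in $\Ho\K$, the rest of the argument is essentially formal. The crux is the careful matching, via Brown's calculus of right-fractions, of the monoid structure on projections in $B \otimes \Compact$ with the triangulated group structure on $\MorHo{\K}{\Co, B}$.
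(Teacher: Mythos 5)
Your overall architecture --- prove representability of $K_*$ by $\Co$ and $\Omega\Co$ in $\Ho\K$, then run a geometric free resolution through the triangulated long exact sequence --- is the same as the paper's, and the resolution step is fine (subgroups of free abelian groups are free, so $P_1$ is again geometric). The gap is in the representability step itself, which you correctly identify as the crux. You propose to construct the map $K_0(B) \to \MorHo{\K}{\Co, B}$ by group-completing the monoid map $\pi_0\mathrm{Proj}(B \otimes \Compact) \to \MorHo{\K}{\Co, B}$, implicitly identifying $K_0(B)$ with the Grothendieck completion of $\pi_0\mathrm{Proj}(B \otimes \Compact)$. That identification is false for non-unital $B$: for $B = C_0(\mathbb{R}^2)$ the algebra $B \otimes \Compact$ has no nonzero projections (the rank of a continuous projection-valued function on a connected space is constant, and vanishing at infinity forces it to be $0$), so the monoid is trivial and its group completion is $0$, while $K_0(C_0(\mathbb{R}^2)) \cong \Z$. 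Consequently your map out of $K_0(B)$ is not actually defined, and the surjectivity of the ``evaluate at $1 \in K_0(\Co)$'' map is not established either, since not every element of $\MorHo{\K}{\Co, B}$ need come from an honest $*$-homomorphism $\Co \to B \otimes \Compact$ (morphisms in $\Ho\K$ are right fractions $\Co \overset{\sim}{\leftarrow} A' \to B$, not just homomorphisms).

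The paper's fix is exactly the device you are missing: Cuntz's algebra $q\Co = \ker(\Co \coprod \Co \to \Co)$, for which one has genuine natural isomorphisms $K_0(D) \cong \MorHo{\Calg}{q\Co, D \otimes \Compact}$ and $K_1(D) \cong \MorHo{\Calg}{\Omega\Co, D \otimes \Compact}$ already in the \emph{ordinary} homotopy category, together with the $K$-equivalence $q\Co \we \Co$, which identifies $\Co$ with $q\Co$ in $\Ho\K$. A homomorphism out of $q\Co$ encodes a formal difference of projections, which is what captures $K_0$ of non-unital algebras. With $q\Co$ in hand, the paper proves directly that $\MorHo{\K}{A,B} \to \Hom(K_*(A), K_*(B))$ is bijective whenever $K_*(A)$ is free, by realizing $A \otimes \Compact$ up to $K$-equivalence as $\bigl(\coprod_I q\Co\bigr) \coprod \bigl(\coprod_J \Omega\Co\bigr)$ and using that coproducts are preserved by $\Calg \to \Ho\Calg$ (Remark~\ref{rem prod coprod}); note that your appeal to that remark for coproducts in $\Ho\K$ rather than $\Ho\Calg$ also needs this detour through $\Ho\Calg$. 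Once you replace the projection picture by the $q\Co$ picture, the rest of your argument goes through and coincides with the paper's.
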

\begin{proof} We have a natural (additive) map 
	\begin{equation}\label{eq uct map}
	\MorHo{\K}{A, B} \to \Hom_\Z(K_*(A), K_*(B)).
	\end{equation}
We claim that this is an isomorphism if $K_{*}(A)$ is free --- for $A = \Co$ we get (\ref{HoK C}). 

Indeed, suppose that $K_{*}(A)$ is free. First recall that we have natural isomorphisms 
	\begin{align}
	K_0(D) &= \MorHo{\Calg}{q\Co, D \otimes \Compact},\\
	K_1(D) &= \MorHo{\Calg}{\Omega\Co, D \otimes \Compact},
	\end{align}
where $q\Co$ is the kernel of the folding map $(\Co \coprod \Co \to \Co)$. We have a $K$-equivalence $q\Co \we \Co$.

Then it is clear that any map $K_{*}(A) \to K_{*}(B)$ can be implemented by an element of the form
	\begin{equation}
	\xymatrix{
	\left(\coprod_{I} q\Co\right) \coprod \left(\coprod_{J} \Omega \Co\right) \ar[d]^{\wr} \ar[r]& B \otimes \Compact\\
	A \otimes \Compact  & \\
	A \ar[u]_{\wr} &B \ar[uu]_{\wr}
	}
	\end{equation}
in $\Ho\K$. Hence (\ref{eq uct map}) is surjective. To see injectivity of (\ref{eq uct map}), let 
	\begin{equation}\label{eq element of HoK}
	\xymatrix{
	A & A' \ar[r] \ar[l]_{\sim}& B}
	\end{equation}
be a morphism in $\MorHo{\K}{A, B}$ that maps to $0 \in \Hom(K_*(A), K_*(B))$. Then we can complete (\ref{eq element of HoK}) to a homotopy-commutative diagram
	\begin{equation}
	\xymatrix{
	\left(\coprod_{I} q\Co\right) \coprod \left(\coprod_{J} \Omega \Co\right) \ar[d]^{\wr} \ar[r]^-{\sim}& A'\otimes \Compact \ar[r]& B\otimes \Compact\\
	A \otimes \Compact &  &\\
	A\ar[u]_{\wr} &A' \ar[r] \ar[l]_{\sim} \ar[uu]_{\wr}&B \ar[uu]_{\wr}
	}
	\end{equation}
in $\Ho\Calg$. Then the top horizontal map is null-homotopic, i.e.\ zero in $\Ho\Calg$, hence zero in $\Ho\K$. In other words, (\ref{eq uct map}) is injective if $K_{*}(A)$ is free. 

The general case follows using a geometric resolution of $K_{*}(A)$. See for instance \cite{math/0403511}.
\end{proof}

\subsection{$K\!K$-Theory}\label{subsec KK}
In the next two subsections, we will concentrate on the category $\Csep$ of {\em separable} \cast-algebras. We refer to \cite[Chapter VIII]{MR1656031} for details about Kasparov's $K\!K$-theory.

Recall that, in the Cuntz picture of $K\!K$-theory (cf.\ \cite{MR899916}), we have
	\begin{equation}
	K\!K(A, B) \coloneqq \MorHo{\Calg}{qA, B \otimes \Compact} = \MorHo{\M}{qA, B},
	\end{equation}
where $qA$ is the kernel of the folding map $\id_A \coprod \id_A\colon A \coprod A \to A$.

\begin{lem}\label{lem les for kk fib}
Let $E \to B$ be a Schochet fibration with fibre $F$. Then for any $D \in \Calg$, we have a natural 6-term exact sequence:
	\begin{equation}
	\xymatrix{
	K\!K(D, F) \ar[r] & K\!K(D, E) \ar[r] & K\!K(D, B) \ar[d]\\
	K\!K(D, \Omega B) \ar[u] & \ar[l] K\!K(D, \Omega E) & \ar[l] K\!K(D, \Omega F)
	}
	\end{equation}
\end{lem}
\begin{proof} Follows from the fibre exact sequence (cf.\ Theorem~\ref{thm Puppe}) in $\Ho\M$ (or $\Ho\Calg$) and Bott Periodicity.
\end{proof}
	
\begin{defn}
A $*$-homomorphisms $t\colon A \to B$ in $\Csep$ is called a $K\!K$-equivalence if 
	\begin{equation}
	t_{*}\colon K\!K(D, A) \to K\!K(D,B)
	\end{equation}
is an isomorphism for all $D \inn \Csep$. 
\end{defn}

The following example is the cornerstone of the Cuntz picture of $K\!K$-theory. 
\begin{ex}
For any $A \inn \Csep$, the composition
	\begin{equation}
	\xymatrix{qA \ar@{>->}[r] & A \coprod A \ar[r]^-{\id_{A} \coprod 0} & A}
	\end{equation}
is a $K\!K$-equivalence. 
\end{ex}

In particular, we have an identification
	\begin{equation}
	K\!K(A, B) \cong \MorHo{\M}{qA, qB} = \MorHo{\Calg}{qA \otimes \Compact, qB \otimes \Compact}.
	\end{equation}
Under this identification, composition of $K\!K$-elements correspond to composition of homotopy classes (cf.\ \cite{MR899916}). In particular, a $*$-homomorphism is a $K\!K$-equivalence if and only if it determines an invertible element in $K\!K$, as expected.
	
\begin{thm}\label{thm kk fibrant} The category of {\em separable} \cast-algebras forms a category of fibrant objects with weak equivalences the $K\!K$-equivalences and fibrations the Schochet fibrations, denoted $\KK$, whose homotopy category $\Ho\KK$ is equivalent to the $K\!K$-category of Kasparov. It follows that Kasparov's $K\!K$-category is a stable triangulated category.
\end{thm}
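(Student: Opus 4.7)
The plan is to realize $\KK$ as an instance of Theorem~\ref{thm localization} applied (in the separable setting) to the ordinary homotopy theory of Theorem~\ref{thm ord homotopy}, and then to identify its stable homotopy category with Kasparov's $K\!K$-category via Cuntz's picture. Concretely, I would take $\S \subseteq \Csep$ to be the collection of Cuntz maps $\pi_{A}\colon qA \to A$ together with the stabilization maps $e_{11}\otimes\id_{A}\colon A \to \Compact \otimes A$, for $A$ ranging over separable \cast-algebras. Theorem~\ref{thm localization} then produces a pointed category of fibrant objects $\R_{\S}\Csep$ whose fibrations are the Schochet fibrations, whose weak equivalences are the $\S^{-1}$-weak equivalences, and whose path- and loop-objects are inherited from $\Csep$.

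Next I would verify that $\R_{\S}\Csep$ is stable, so that $\Ho{\R_{\S}\Csep} \cong \SW{\R_{\S}\Csep}$ inherits the triangulated structure from Theorem~\ref{thm fibrant object to triangulated}. The same boundary-map argument as for $\K$ in Subsection~\ref{subsec K} applies here: the Toeplitz extension implementing Bott periodicity yields a natural isomorphism $\Omega^{2}A \cong A \otimes \Compact$ in $\Ho{\R_{\S}\Csep}$, and combined with the inverted stabilization maps this forces $\Omega$ to be invertible. By Remark~\ref{prop stable => triangulated}, $\Ho{\R_{\S}\Csep}$ is then naturally a triangulated category.

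I would then identify $\SW{\R_{\S}\Csep}$ with Kasparov's $K\!K$-category. By Theorem~\ref{thm localization}, $\SW{\R_{\S}\Csep}$ is the universal triangulated homology theory out of $\Csep$ inverting $\Omega^{0}\S$: $C^{*}$-stability comes from inverting the $e_{11}$-maps, split-exactness comes from inverting the Cuntz maps $\pi_{A}$ together with the Puppe triangles, and homotopy invariance is automatic. The Cuntz--Higson universal characterization of $K\!K$-theory then identifies $\SW{\R_{\S}\Csep}$ with Kasparov's category; via Cuntz's picture $KK(A, B) \cong \MorHo{\Calg}{qA, B\otimes\Compact}$, a morphism $t\colon A \to B$ is a weak equivalence in $\R_{\S}\Csep$ iff $KK(D, t)$ is a bijection for every $D \in \Csep$ (Yoneda), iff $t_{*}\colon \Mor\Calg(qD, A \otimes \Compact) \to \Mor\Calg(qD, B \otimes \Compact)$ is a $\pi_{0}$-equivalence for every $D$, which is precisely the definition of a $K\!K$-equivalence in the statement.

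The main obstacle is the identification $\SW{\R_{\S}\Csep} \cong KK$, which rests on the Cuntz--Higson universal property of $K\!K$-theory. Its application requires matching the two notions of split-exactness: a split extension $0 \to J \to E \to B \to 0$ in $\Csep$ is a Schochet fibration (the splitting trivially solves the path-lifting problem), so it sits in a fibration triangle in $\SW\Csep$, and one must check that inverting the Cuntz map $qB \to B$ converts these triangles into the semi-split exact sequences of the Cuntz--Higson formulation — so that the universal split-exact, stable, homotopy-invariant target can be read off from $\SW{\R_{\S}\Csep}$.
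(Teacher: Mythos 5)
Your route is genuinely different from the paper's. The paper verifies the category-of-fibrant-objects axioms directly for the $K\!K$-equivalences (the same representability argument as in Theorem~\ref{thm ord homotopy}, applied to the functors $\Mor\Calg(qD, -\otimes\Compact)$) and then identifies $\Ho\KK$ with Kasparov's category by the explicit functor $\Phi(f) = q(f)\otimes\id_{\Compact}$ into the Cuntz picture $K\!K(A,B) = \MorHo{\Calg}{qA\otimes\Compact, qB\otimes\Compact}$: faithfulness is immediate from the definition of $K\!K$-equivalence, and fullness uses only Cuntz's theorem that $q(qA)\otimes\Compact$ is homotopy equivalent to $qA\otimes\Compact$. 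You instead invoke Theorem~\ref{thm localization} for $\S = \{qA\to A\}\cup\{A\to\Compact\otimes A\}$ and appeal to the universal property of $K\!K$-theory. This is viable in principle --- the paper itself remarks after Theorem~\ref{thm bu E} that Kasparov's category can be described as the universal split-exact triangulated homology theory --- but it proves the theorem \emph{as stated} only after you have shown that the $\S^{-1}$-weak equivalences coincide with the $K\!K$-equivalences of the statement, and that identification is where essentially all the content sits. The direction ``$K\!K$-equivalence $\Rightarrow$ $\S^{-1}$-weak equivalence'' is exactly the Cuntz--Higson universal characterization (every stable, homotopy-invariant, split-exact homology theory factors through $K\!K$), which you import as a black box; the converse direction requires knowing that $K\!K(D,-)$ is itself a homology theory on $\SW\Csep$ in the sense of Definition~\ref{defn fht}, i.e.\ that it is exact on Schochet-fibration triangles --- a point you do not address. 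The paper's argument needs neither.

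There is also a concrete error in your bridge between the two notions of split-exactness: a split surjection is \emph{not} a Schochet fibration merely because it splits. A $*$-homomorphic splitting $s\colon B\to E$ lifts a path $\gamma$ in $\Mor\Calg(D,B)$ to $s_{*}\circ\gamma$, but the path-lifting property of Definition~\ref{defn pi0} demands a lift with \emph{prescribed} initial point, and $s_{*}(\gamma(0))$ need not agree with, nor be connected in the fibre to, the given lift of $\gamma(0)$. The correct way to extract split-exactness from your localization is not via fibration triangles of split surjections, but via Cuntz's observation that for a stable homotopy-invariant functor, inverting the maps $qA\to A$ is equivalent to split-exactness; that is what the universal-property argument actually runs on. With that repair, and with the two unproved inputs above made explicit (or cited), your outline can be completed, but as written it leaves the decisive steps to the reader.
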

\begin{proof} The category of fibrant objects structure follows from Theorem~\ref{thm fibre homology fibrant object}, since $K\!K(D, -)$ gives a homology theory on $\Calg$ in the sense of Definition~\ref{defn fht} for all $D$ by Lemma~\ref{lem les for kk fib}.

Now the functor $\Ho\KK \to K\!K$ given by $A \mapsto qA \otimes \Compact$ is easily seen to be an equivalence of categories. Stability follows from Bott Periodicity.
\end{proof}

\begin{rem} Note that in Theorem \ref{thm kk fibrant}, we can take the semi-split surjections, i.e.\ surjections with a completely positive contractive splitting, to be the fibrations. Indeed, the only nontrivial part is (FW1): if $p\colon E \to B$ is a semi-split surjection which is also a $K\!K$-equivalence and $f\colon A \to B$ is arbitrary, then the pullback $f^*(p)$ is also a $K\!K$-equivalence. However, this is clear since if $p$ is a semi-split surjection with kernel $F$, then $F \to \Cone{p}$ is a $K\!K$-equivalence (see \cite[Theorem 19.5.5]{MR1656031}), hence $p$ is a $K\!K$-equivalence if and only $F$ is $K\!K$-contractible if and only if $f^*(p)$ is a $K\!K$-equivalence (see Diagram (\ref{eq pullback of fibration seq})).

Note also that Schochet fibrations and semi-split surjections give rise to the same class of distinguished triangles in $\Ho\KK \cong \SW\KK$.
\end{rem}

\subsection{Universal Homology Theories}\label{subsec E}

We consider $\Csep$ as a category of fibrant objects with weak equivalences the homotopy equivalences and fibrations the Schochet fibrations. In this subsection, we identify various localizations of $\Csep$.

\begin{defn}
A {\em fibre homology theory} on $\Csep$ is a homology theory the pointed category of fibrant objects $\Csep$ in the sense of Definition~\ref{defn fht} i.e.\ a homological functor on the triangulated category $\SW\Csep$ to $\Ab$ 
\end{defn}

\begin{defn}
We say that a fibre homology theory $\H$ on $\Csep$ is {\em excisive} with respect to a surjection $p$, if the inclusion $\ker(p) \to \Cone{p}$ is an $\H$-equivalence. A {\em homology theory} on $\Csep$ is a fibre homology theory excisive with respect to all surjections.
\end{defn}

\begin{defn}\label{defn we} We say that a morphism $t \in \Csep$ is a {\em weak equivalence} if it is an $\H$-equivalence for all homology theories $\H$ on $\Csep$.
\end{defn}

\begin{rem} Note that homotopy equivalences are weak equivalences.
\end{rem}

\begin{thm}\label{thm AM} The category $\Csep$ forms a pointed category of fibrant objects with weak equivalences as in Definition \ref{defn we} and fibrations the Schochet fibrations, whose stable homotopy category is triangulated equivalent to the stable homotopy category of \cite[Theorem 3.3.5]{andreasthomE}.
\end{thm}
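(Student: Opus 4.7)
The plan is to obtain this result by a direct application of Theorem~\ref{thm localization} together with a universal-property identification with Thom's construction.

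Let $\S\subseteq \Csep$ be the class of canonical inclusions $\ker(p)\to \Cone{p}$ as $p$ ranges over all surjections in $\Csep$ (with $\Cone{p}$ the homotopy fibre furnished by the Factorization Lemma~\ref{lem factorization}). By the definition of a homology theory just preceding Definition~\ref{defn we}, a fibre homology theory $\H$ on $\Csep$ is a homology theory in that sense precisely when every $s\in \S$ is an $\H$-equivalence. Consequently, the weak equivalences introduced in Definition~\ref{defn we} coincide with the $\S^{-1}$-weak equivalences, and Theorem~\ref{thm localization} directly yields, with no further work, the claimed pointed category of fibrant objects $\R_{\S}\Csep$ with the Schochet fibrations, together with the identification $\SW{\R_{\S}\Csep}\simeq \SW\Csep[(\Omega^{0}\S)^{-1}]$ of triangulated categories.

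It then remains to identify this Verdier localization with the stable homotopy category of \cite[Theorem 3.3.5]{andreasthomE}. My strategy is to show both categories are characterized by the same universal property: a triangulated functor $T\colon \SW\Csep\to \P$ factors (uniquely) if and only if it inverts $\Omega^{0}\S$, equivalently (by Lemma~\ref{lem fibre and homotopy fibre} and Remark~\ref{rem distinguished triangle}) if and only if each short exact sequence $\ker(p)\rightarrowtail E\twoheadrightarrow B$ of \cast-algebras becomes a distinguished triangle $T(\ker(p))\to T(E)\to T(B)\to \Sigma T(\ker(p))$ in $\P$. Theorem~\ref{thm localization} gives exactly this universal property for $\SW{\R_{\S}\Csep}$, while Thom's construction in loc.\ cit.\ produces the universal triangulated suspension-stable homotopy functor out of $\Csep$ satisfying precisely this excision axiom. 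Applying each universal property in turn produces mutually inverse triangulated functors and hence the asserted equivalence.

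The main obstacle is the bookkeeping required to match the two formulations of excision. Here excision is phrased as inversion of $\ker(p)\to \Cone{p}$, whereas Thom encodes excision via Puppe-style long exact sequences attached to arbitrary \cast-extensions. For Schochet fibrations the two notions coincide by Lemma~\ref{lem fibre and homotopy fibre}, but for general surjections one must identify $\Cone{p}$ with the usual mapping cone of $p$ through the Factorization Lemma and check that the distinguished triangle produced by the fibre sequence of Remark~\ref{rem distinguished triangle} is the Puppe triangle associated to the \cast-extension in Thom's framework. Once this identification is unwound, the two universal problems become literally the same, and the equivalence of stable homotopy categories follows formally.
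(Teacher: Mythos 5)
Your proposal is correct and follows essentially the same route as the paper: take $\S$ to be the excision morphisms $\ker(p)\to \Cone{p}$, apply Theorem~\ref{thm localization} to realize $\SW{\R_{\S}\Csep}$ as the universal triangulated homology theory inverting $\S$, and then match this universal property against Thom's (the paper simply cites \cite[Theorem 3.3.6]{andreasthomE} for that final identification). Your extra care in reconciling the two formulations of excision is the same bookkeeping the paper handles in the remark following Theorem~\ref{thm bu E}.
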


By \cite{MR1262931}, the stable homotopy category mentioned above is equivalent to the suspension-stable version of the strong shape category.

\begin{proof}
It follows from Theorem~\ref{thm localization} that the stable homotopy category is a {\em universal} triangulated homology theory in the sense of \cite[Definition 2.3.3]{andreasthomE}. Then \cite[Theorem 3.3.6]{andreasthomE} finishes the proof.

\end{proof}

For a Hilbert space $H$, let $e_{H}\colon \Co \to \Compact(H)$ denote a rank-one projection.
\begin{defn} A (fibre) homology theory $\H$ on $\Csep$ is said to be 
\begin{enumerate}
\item {\em matrix-invariant} if $\id_B \otimes e_{H}$ is an $\H$-equivalence for all $B \inn \Csep$ and $H$ finite dimensional and 
\item {\em \cast-invariant} if $\id_B \otimes e_{H}$ is an $\H$-equivalence for all $B \inn \Csep$ and $H$ separable.
\end{enumerate}
\end{defn}

\begin{defn} A morphism $t \in \Csep$ is said to be 
\begin{enumerate}
\item an {\em $\bu$-equivalence} if it induces isomorphism on all matrix-invariant homology theories and
\item an {\em $E$-equivalence} if it induces isomorphism on all \cast-invariant homology theories.
\end{enumerate}
\end{defn}

\begin{thm}\label{thm bu E} 
\begin{enumerate}
\item The category $\Csep$ forms a pointed category of fibrant objects with weak equivalences the $\bu$-equivalences and fibrations the Schochet fibrations, whose stable homotopy category is triangulated equivalent to the category $\bu$ of \cite[Theorem 4.2.1]{andreasthomE}.
\item The category $\Csep$ forms a stable pointed category of fibrant objects with weak equivalences the $E$-equivalences and fibrations the Schochet fibrations, whose homotopy category is a triangulated category, equivalent to the $E$-theory of Higson.
\end{enumerate}
\end{thm}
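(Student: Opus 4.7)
The plan is to apply Theorem~\ref{thm localization} to $\Csep$, equipped with the category of fibrant objects structure of Theorem~\ref{thm ord homotopy} (homotopy equivalences and Schochet fibrations), with a suitable class $\S$ of morphisms in each case.

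For part~(1), let $\S_{\bu}$ consist of (a) every inclusion $\ker(p) \to \Cone{p}$, as $p$ ranges over all surjections in $\Csep$, together with (b) every morphism $\id_{B} \otimes e_{H}$ for $B \in \Csep$ and $H$ a finite-dimensional Hilbert space. A fibre homology theory on $\Csep$ (in the sense of Definition~\ref{defn fht}) inverts the maps of (a) exactly when it is excisive, and inverts those of (b) exactly when it is matrix-invariant; hence the $\S_{\bu}^{-1}$-weak equivalences coincide, by unwinding definitions, with the $\bu$-equivalences. Theorem~\ref{thm localization} then yields the desired pointed category of fibrant objects $\R_{\S_{\bu}}\Csep$, whose stable homotopy category is the Verdier localization $\SW{\Csep}[(\Omega^{0}\S_{\bu})^{-1}]$. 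This is the universal excisive, matrix-invariant, triangulated homology theory on $\Csep$, which by \cite[Theorem~4.2.1]{andreasthomE} is Thom's $\bu$.

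For part~(2), I would run the identical argument with $\S_{E}$ obtained from $\S_{\bu}$ by replacing ``finite-dimensional'' with ``separable'' in~(b). The $\S_{E}^{-1}$-weak equivalences are then precisely the $E$-equivalences, and the resulting stable homotopy category is the universal excisive, \cast-stable, triangulated homology theory on $\Csep$, equivalent to Higson's $E$-theory. For the stability assertion, note that every $\id_{B}\otimes e_{H}$ with $H$ separable is now a weak equivalence, so the endofunctor $-\otimes\Compact$ is naturally isomorphic to the identity in $\Ho{\R_{\S_{E}}\Csep}$. Bott periodicity, implemented as in Subsection~\ref{subsec K} by the boundary map of a Toeplitz-type extension, supplies a natural isomorphism $\Omega^{2}B \cong B \otimes \Compact$; composing, $\Omega^{2} \cong \id$, so $\Omega$ is invertible. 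Remark~\ref{prop stable => triangulated} then gives that $\Ho{\R_{\S_{E}}\Csep}$ is already triangulated and agrees with $\SW{\R_{\S_{E}}\Csep}$.

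The main obstacle I foresee is the identification of the two Verdier localizations with Thom's $\bu$ and Higson's $E$-theory. This step reduces to the universal characterizations of $\bu$ and $E$ as universal excisive, matrix-invariant (respectively, \cast-stable), triangulated homology theories on $\Csep$, which is the content of \cite[Theorem~4.2.1]{andreasthomE} and the corresponding universal description of $E$-theory. A minor verification along the way is that inversion of the maps $\ker(p)\to\Cone{p}$ in a fibre homology theory really amounts to excision, which follows from Lemma~\ref{lem fibre and homotopy fibre} together with the long exact sequence associated to the distinguished triangle of Remark~\ref{rem distinguished triangle}; once this is in place, the remainder is a formal invocation of Theorem~\ref{thm localization}.
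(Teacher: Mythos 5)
Your proposal is correct and takes essentially the same approach as the paper: the paper's proof is the one-line invocation of Theorem~\ref{thm localization} together with the universal properties of $\bu$ and $E$ from \cite{andreasthomE}, which is exactly the argument you carry out, only with the choice of $\S$, the identification of the $\S^{-1}$-weak equivalences with the $\bu$- and $E$-equivalences, and the Bott-periodicity stability argument for part~(2) made explicit.
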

\begin{proof} Follows from Theorem~\ref{thm localization} and the universal properties of $\bu$ and $E$ (cf. \cite{andreasthomE}). 
\end{proof}


\begin{rem}
\begin{enumerate}
\item Let $p\colon E \to B$ be a surjection with kernel $F$. Then $p$ is a weak equivalence in the sense of Definition~\ref{defn we} if and only if $F$ is $\H$-acyclic for all homology theories $\H$ on $\Csep$. Indeed, we have a map of extensions where the vertical maps are all weak equivalences:
	\begin{equation}\label{diag FEB FpNpB}
	\xymatrix{
	0  \ar[r]& F\ar[d]^-{\wr} \ar[r]^{i}& E \ar[r]^{p}\ar[d]^-{\wr} & B \ar@{=}[d]  \ar[r] & 0\\
	0 \ar[r]& \Cone{p} \ar@{>->}[r]& \Cyl{p} \ar@{->>}[r]& B \ar[r] & 0
	}.
	\end{equation}
Hence the claim follows from the naturality of the long exact sequence associated to homology theories. It follows that in Theorem~\ref{thm AM} and Theorem~\ref{thm bu E}, we can take the fibrations to be all surjections.	However, the distinguished triangles in the stable homotopy category would be the same (see the diagram (\ref{diag FEB FpNpB})). 
\item We can also describe the $\KK$-category of Kasparov as the universal split-exact triangulated homology theory in a similar way.
\end{enumerate}
\end{rem}

\appendix
\section{No Quillen Model Structure\\ (following Andersen-Grodal)}\label{nomodel}

As noted in the introduction, the homotopy theory of \cast-algebras
does not come from a Quillen model structure. This was perhaps first
pointed out as part of a 1997 preprint by Andersen-Grodal
\cite{grodal-fib},  where they also established a {\em Baues fibration
  category structure} \cite{baues89} on $C^*$-algebras (a notion very
similar to a category of fibrant objects; see \cite[Rem.~I.1a.6]{baues89}). Since their
work however remains unpublished,
we, by permission of the authors, reproduce their non-existence argument in this
appendix.


Recall that if $\M$ is a Quilen model category, then the full subcategory $\M_f$ of fibrant objects in $\M$ is a category of fibrant objects (cf.\ Example \ref{ex model}). 
\begin{thm}\label{thm no Quillen} Let $\Calg$ denote the pointed category of fibrant objects of Theorem~\ref{thm ord homotopy}. Then $\Calg$ is not the full subcategory of fibrant objects of a Quillen model category.
\end{thm}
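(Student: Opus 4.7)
The plan is to argue by contradiction, reducing the theorem to the assertion already flagged in Remark~\ref{rem prod coprod}\eqref{item loop inf-prod}: namely, that the loop functor $\Omega\colon \Ho\Calg \to \Ho\Calg$ fails to preserve infinite products. Suppose $\Calg$ coincides with $\M_f$ for some pointed Quillen model category $\M$. The inclusion $\Calg = \M_f \hookrightarrow \M$ induces an equivalence $\Ho\Calg \xrightarrow{\sim} \Ho\M$. Moreover, since any path-object in $\Calg$ is also a path-object in $\M$ (it is a fibrant replacement of the diagonal), this equivalence intertwines the loop functors constructed via path-objects on either side. In any pointed Quillen model category, the derived suspension $\Sigma\colon\Ho\M \to \Ho\M$ is a left adjoint to $\Omega$ (cf.\ Quillen \cite{MR0223432} or Hovey \cite{MR1650134}); hence $\Omega$ preserves all small limits, in particular arbitrary products.

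Consequently, under our hypothesis, for every family $\{B_i\}_{i\in I}$ of $C^*$-algebras the canonical comparison map
\[
\phi\colon \Omega\Bigl(\prod_{i\in I} B_i\Bigr) \;=\; C_0((0,1)) \otimes \prod_{i\in I} B_i \;\longrightarrow\; \prod_{i\in I} \bigl(C_0((0,1)) \otimes B_i\bigr) \;=\; \prod_{i\in I} \Omega B_i
\]
would have to be an isomorphism in $\Ho\Calg$. The contradiction will therefore follow once we produce a family $\{B_i\}$ for which $\phi$ is not a homotopy equivalence in the sense of Definition~\ref{defn homotopy equivalence}. Intuitively, the source $C_0((0,1))\otimes \prod_i B_i$ consists only of norm-continuous $(0,1)$-families of elements in the sup-normed product, while the target $\prod_i C_0((0,1))\otimes B_i$ allows bounded families of paths in each $B_i$ with no inter-coordinate equicontinuity; this asymmetry persists after applying $\Mor_\Calg(A,-)$, because
\[
\Mor_\Calg\bigl(A,\; C_0((0,1)) \otimes C\bigr) \;\cong\; \Omega \Mor_\Calg(A, C)
\]
in $\Top$, while $\Mor_\Calg(A, \prod_i C_i) = \prod_i \Mor_\Calg(A, C_i)$ as sets, carrying the stronger sup-norm topology on the $\Calg$-product rather than the product topology.

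The main obstacle is making this set-theoretic distinction homotopically visible, i.e.\ producing $A$ and $\{B_i\}$ such that a non-equicontinuous family determines a homotopy class in $[A, \prod_i \Omega B_i]$ which does not lie in the image of $\phi_*\colon [A, \Omega\prod_i B_i] \to [A, \prod_i \Omega B_i]$. A workable choice is to take a countable family $\{B_n\}_{n\in\mathbb N}$ of simple $C^*$-algebras (for example, matrix algebras $B_n = \Matrix_n$, or copies of $\Compact$ acting on increasingly complex Hilbert spaces) together with a representable homology theory sensitive enough to separate such classes: a family $(p_n)$ of paths in $\prod_n \Omega B_n$ whose oscillations in $n$ cannot be synchronized by any single path in $\Omega\prod_n B_n$, even after replacing by a weakly equivalent source. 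Detecting that such a class exists and survives to $\Ho\Calg$ — either by a direct examination of the resulting mapping-cylinder diagrams or by evaluating a $KK$- or $K$-theoretic invariant on $\phi$ — is the technical heart of the Andersen–Grodal argument. Once such a counterexample is in hand, combined with the adjunction argument above, the desired contradiction is immediate.
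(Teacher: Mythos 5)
Your opening reduction is sound and matches the paper's strategy: Lemma~\ref{lem omega adjoint} shows that if $\Calg$ were $\M_f$ for a model category $\M$, the loop functor on $\Ho\Calg$ would admit a left adjoint (and hence, being a right adjoint, would preserve products). The problem is that everything after that reduction is a sketch of intentions rather than a proof. You need to exhibit a specific family $\{B_i\}$ and actually verify that the comparison map $\phi\colon \Omega\prod_i B_i \to \prod_i \Omega B_i$ is not a homotopy equivalence, i.e.\ not invertible in $\Ho\Calg$ --- and you explicitly defer this (``detecting that such a class exists and survives to $\Ho\Calg$ \dots is the technical heart''). The heuristic about non-equicontinuous families only shows that $\phi$ is not surjective as a $*$-homomorphism, which says nothing about invertibility up to homotopy; turning it into a homotopy-theoretic obstruction is precisely the missing content, and the suggested choices ($B_n = \Matrix_n$, ``evaluating a $KK$- or $K$-theoretic invariant'') are not carried far enough to see that any invariant actually distinguishes the two sides. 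As written, the argument is circular at the decisive point: the theorem is reduced to Remark~\ref{rem prod coprod}(\ref{item loop inf-prod}), but that remark itself points to the appendix for its justification.

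For comparison, the paper closes this gap by a different and complete device: it does not attack infinite products at all. Instead it restricts $\Omega$ to the reflective subcategory $\Ho\Aalg$ of abelian algebras, dualizes via Gelfand--Naimark to the suspension $\Sigma = S^1\wedge -$ on pointed compact Hausdorff spaces, and shows the functor $X\mapsto \MorHo{\CM_{\pt}}{\Sigma X, S^1}$ is not representable: a representing object $Y$ would, inside $\Ho{\Top_{\pt}}$, have to map to $\Omega S^1$ inducing the natural bijection, but a compact $Y$ can meet only finitely many of the $\Z$-many path components of $\Omega S^1$. If you want to salvage your route, you would need to supply an equally concrete obstruction for the product map $\phi$; otherwise the cleaner fix is to replace your second half with a compactness/representability argument of this kind.
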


The essential part of the proof is to see that the loop functor
does not admit a left adjoint, as already remarked on in Remark~\ref{rem prod coprod}(\ref{item loop inf-prod}).

\begin{lem}\label{lem omega adjoint}
Let $\M_{f}$ be the full subcategory fibrant objects of a Quillen model category $\M$, considered as a category of fibrant objects as in Example~\ref{ex model}. Then the loop-functor 
	\begin{equation}
	\Omega\colon \Ho{\M_{f}} \to \Ho{\M_{f}}
	\end{equation}
admits a left-adjoint.
\end{lem}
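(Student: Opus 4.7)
The plan is to reduce the claim to the classical adjunction $\Sigma \dashv \Omega$ on the homotopy category of a pointed Quillen model category, due essentially to Quillen. First I would invoke the equivalence $\Ho{\M_{f}} \cong \Ho{\M}$ induced by the inclusion (cf.\ the example at the end of Subsection~1.3); moreover, this equivalence intertwines the loop-functor defined via path objects in $\M_{f}$ with the loop-functor defined via path objects in $\M$ on all of $\Ho{\M}$. Hence it suffices to exhibit a left-adjoint to $\Omega\colon \Ho{\M} \to \Ho{\M}$.

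Next, I would construct the suspension functor $\Sigma\colon \Ho{\M} \to \Ho{\M}$ using cofibrant replacements and cylinder objects: for each $A \in \M$, choose a cofibrant replacement $QA$ and a cylinder object $\mathrm{Cyl}(QA)$ with cofibration $QA \coprod QA \rightarrowtail \mathrm{Cyl}(QA)$, and define $\Sigma A$ as the pushout
\begin{equation}
\xymatrix{
QA \coprod QA \ar@{>->}[r] \ar[d] & \mathrm{Cyl}(QA) \ar[d] \\
\pt \ar[r] & \Sigma A
}.
\end{equation}
This construction is well-defined on $\Ho{\M}$. The standard argument (dual to the construction of the loop object from a path object) then produces a natural bijection
\begin{equation}
\MorHo{\M}{\Sigma A, B} \cong \MorHo{\M}{A, \Omega B},
\end{equation}
by matching, for cofibrant $A$ and fibrant $B$, left homotopy classes out of $\Sigma A$ with right homotopy classes into $\Omega B$ via the cylinder/path-object duality. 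This is exactly Quillen's original adjunction of loop and suspension functors on the homotopy category of a pointed model category.

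Transporting this adjunction across the equivalence $\Ho{\M_{f}} \simeq \Ho{\M}$ yields a left-adjoint to $\Omega$ on $\Ho{\M_{f}}$, as required. The main conceptual point (and the only nontrivial step) is the construction of $\Sigma$, since it requires leaving the world of fibrant objects and using cofibrant replacements---an operation not available intrinsically in a category of fibrant objects, and indeed the obstruction that will be leveraged in the proof of Theorem~\ref{thm no Quillen} to derive a contradiction for $\Calg$.
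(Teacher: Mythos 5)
Your proposal is correct and is essentially the paper's own argument: the paper simply cites Quillen's Theorems I.1.1 and I.2.2 (the equivalence $\Ho{\M_f}\cong\Ho{\M}$ with compatible loop objects, and the classical $\Sigma\dashv\Omega$ adjunction built from cylinder/path objects), which is exactly what you reconstruct. Your closing remark correctly identifies the role of cofibrant replacement as the step that fails intrinsically in a category of fibrant objects, which is indeed the point exploited later in the appendix.
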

\begin{proof} Follows from Theorem I.1.1 and Theorem I.2.2 of \cite{MR0223432} and the definitions.
\end{proof}

The following Lemma is clear.
\begin{lem}\label{rem abelian algebra}
Let $\Aalg \subseteq \Calg$ denote the full subcategory consisting of abelian \cast-algebras. Then $\Aalg$ is a reflective (monoidal) subcategory of $\Calg$ -- the left-adjoint of the inclusion $i\colon \Aalg \to \Calg$ is the abelianization $(-)^{\mathrm{ab}}\colon \Calg \to \Aalg$:
	\begin{equation}\label{eq adjunction of A in C}
	\Mor\Aalg(D^{\mathrm{ab}}, B) \cong \Mor{\Calg}(D, iB),   
	\end{equation}
for $D \inn \Calg,\, B \inn \Aalg$. 
\qed
\end{lem}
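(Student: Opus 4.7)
The plan is to construct the abelianization functor explicitly via a commutator ideal quotient, and then verify the universal property that yields the adjunction. Monoidality of the inclusion is essentially automatic from Gelfand--Naimark, and monoidality of the abelianization follows from juggling universal properties.

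First I would construct $(-)^{\mathrm{ab}}$. For a $C^*$-algebra $D$, let $I_D \subseteq D$ denote the closed two-sided ideal generated by the commutators $\{ab - ba : a, b \in D\}$. This is a closed two-sided $*$-ideal (since $(ab - ba)^* = b^*a^* - a^*b^*$ is itself a commutator). Define $D^{\mathrm{ab}} \coloneqq D / I_D$, equipped with the quotient $C^*$-norm. In the quotient, every commutator vanishes, so $D^{\mathrm{ab}}$ is an abelian $C^*$-algebra. Functoriality in $D$ is immediate because any $*$-homomorphism $\phi\colon D \to D'$ sends commutators to commutators, hence $I_D$ into $I_{D'}$.

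Next I would establish the adjunction \eqref{eq adjunction of A in C}. Given a $*$-homomorphism $\phi \colon D \to iB$ with $B \inn \Aalg$, the image is abelian, so $\phi(ab - ba) = 0$ for all $a, b \in D$; hence $\phi$ vanishes on $I_D$ and factors uniquely through a $*$-homomorphism $\overline{\phi}\colon D^{\mathrm{ab}} \to B$. The assignment $\phi \mapsto \overline{\phi}$ is a natural bijection, with inverse precomposition by the projection $D \twoheadrightarrow D^{\mathrm{ab}}$. This exhibits $(-)^{\mathrm{ab}}$ as left adjoint to $i$, so $\Aalg$ is reflective in $\Calg$.

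For the monoidal part, $i\colon \Aalg \to \Calg$ is strong monoidal because the maximal (equivalently, any) tensor product of two abelian $C^*$-algebras is abelian: $i(C(X)) \otimes i(C(Y)) \cong C(X \times Y) \cong i(C(X) \otimes C(Y))$. The left adjoint then automatically inherits an oplax monoidal structure, which in this case is strong: the universal property provides a natural $*$-homomorphism $(D_1 \otimes D_2)^{\mathrm{ab}} \to D_1^{\mathrm{ab}} \otimes D_2^{\mathrm{ab}}$, and one checks it is an isomorphism by noting that both sides classify pairs of $*$-homomorphisms $D_j \to iB$ into a common abelian target, using \eqref{eq adjunction of A in C} and the symmetric monoidal adjunction between $\otimes$ and the internal hom. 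The only mildly delicate point --- though still routine --- is verifying that $I_{D_1 \otimes D_2}$ equals the closure of $I_{D_1} \otimes D_2 + D_1 \otimes I_{D_2}$, which follows from the bilinearity of commutators together with density of elementary tensors. I do not anticipate any serious obstacle; the whole argument is the standard abelianization story translated into the $C^*$-setting, which is why the author labels it as clear.
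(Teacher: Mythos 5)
Your proof is correct and is exactly the standard argument the paper has in mind: the paper offers no proof at all (it declares the lemma ``clear''), and the commutator-ideal construction of $(-)^{\mathrm{ab}}$ together with the factorization of any $*$-homomorphism into an abelian target is the canonical way to fill that in. The only minor remark is that your final claim identifying $I_{D_1 \otimes D_2}$ with the closure of $I_{D_1} \otimes D_2 + D_1 \otimes I_{D_2}$ is redundant, since your Yoneda-style comparison of the two universal properties already yields the strong monoidality of $(-)^{\mathrm{ab}}$ without it.
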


In particular, $\Aalg$ is a pointed category of fibrant objects (cf.\ Example~\ref{ex ref subcat}).
\begin{cor} 
 The homotopy category $\Ho\Aalg$ is a full reflective subcategory of $\Ho\Calg$ and the loop-functor 
 	\begin{equation}
	\Omega\colon \Ho\Aalg \to \Ho\Aalg	
	\end{equation}
is the restriction of $\Omega\colon \Ho\Calg \to \Ho\Calg$ to $\Ho\Aalg$. 
\end{cor}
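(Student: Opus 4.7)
The plan is to descend the reflective adjunction of the preceding lemma to the homotopy categories, then to read off the loop-functor claim from the observation that the path-object used on both sides is the same.

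First, by the preceding lemma the inclusion $i\colon \Aalg \to \Calg$ is a fully faithful right adjoint with left adjoint $(-)^{\mathrm{ab}}$, and the surrounding text records that $\Aalg$ is itself a pointed category of fibrant objects with $i$ exact (Example~\ref{ex ref subcat}). Thus $i$ preserves weak equivalences and descends to $\Ho i\colon \Ho\Aalg \to \Ho\Calg$. The main step will be to show that abelianization also descends. For this, I would use the natural identification
\[
(C[0,1] \otimes B)^{\mathrm{ab}} \;\cong\; C[0,1] \otimes B^{\mathrm{ab}},
\]
which holds because $C[0,1]$ is abelian: every commutator in the maximal tensor product lies in the closure of $C[0,1]\otimes [B,B]$, and the universal property identifies the quotient. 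Consequently $(-)^{\mathrm{ab}}$ preserves the functorial path-object, so it takes right-homotopic maps to right-homotopic maps and, in particular, preserves homotopy equivalences; hence it descends to $L\colon \Ho\Calg \to \Ho\Aalg$. The pointwise adjunction bijection of the preceding lemma is compatible with passage to homotopy classes, so it upgrades to an adjunction $L \dashv \Ho i$. Since the unit $D \to i(D^{\mathrm{ab}})$ is an isomorphism whenever $D \in \Aalg$, $\Ho i$ is fully faithful, giving the reflective-subcategory statement.

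For the loop-functor claim, the functorial path-object $B^{I} \coloneqq C[0,1] \otimes B$ of an abelian $B$ lies in $\Aalg$, and both categories use it to define $\Omega B$ as the fibre of $(d_{0},d_{1})\colon B^{I} \to B \times B$. Since $i$ is exact, preserving products, fibrations and pullbacks, this fibre computed in $\Aalg$ coincides with the one computed in $\Calg$. Combined with the fullness/faithfulness of $\Ho i$ from the previous paragraph, this yields the restriction statement. The main obstacle I expect is the tensor-product identity above, which needs a small hands-on argument with commutators in maximal tensor products; everything else is formal manipulation of reflective adjunctions on homotopy categories.
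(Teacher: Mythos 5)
Your proposal is correct and follows essentially the same route as the paper: the paper's proof simply asserts that the adjunction of Lemma~\ref{rem abelian algebra} descends to the homotopy categories (citing Brown's Adjoint Functor Lemma) and declares the rest clear, while you supply the underlying verification, namely that $(-)^{\mathrm{ab}}$ and $i$ are compatible with the functorial path-object $C[0,1]\otimes(-)$ via $(C[0,1]\otimes B)^{\mathrm{ab}}\cong C[0,1]\otimes B^{\mathrm{ab}}$, so that the adjunction bijection passes to homotopy classes and the loop-object is computed identically in $\Aalg$ and $\Calg$. This is a more explicit rendering of the same argument, not a different one.
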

\begin{proof}  The adjunction (\ref{eq adjunction of A in C}) descends to the homotopy categories and gives and adjunction:
 	\begin{equation}
	\MorHo{\Aalg}{D^{\mathrm{ab}}, B} \cong \MorHo{\Calg}{D, iB},   
	\end{equation}
for $D \inn \Calg,\, B \inn \Aalg$. See also \cite[Adjoint functor lemma]{MR0341469}. The rest of the statements are clear.
\end{proof}

Consequently, we see that $\SW\Aalg$ is a full triangulated subcategory of $\SW\Calg$.



\begin{lem}\label{lem loop on A} The loop-functor $\Omega\colon \Ho\Aalg \to \Ho\Aalg$ does not admit a left-adjoint.
\end{lem}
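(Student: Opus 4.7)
The strategy is that any right adjoint preserves all limits, so it suffices to exhibit an infinite product that $\Omega$ fails to preserve in $\Ho\Aalg$. The corollary preceding the lemma shows that $\Omega$ on $\Ho\Aalg$ is the restriction of $\Omega$ on $\Ho\Calg$, and essentially the same argument as in Remark~\ref{rem prod coprod}(\ref{item prod coprod}) (combined with the fact that $\Aalg$ is a full reflective subcategory of $\Calg$ closed under products) shows that products in $\Ho\Aalg$ are represented by the underlying products in $\Aalg$. Thus I need only find a family $\{A_i\}$ in $\Aalg$ for which the canonical comparison $*$-homomorphism $\Omega(\prod_i A_i) \to \prod_i \Omega A_i$ is not a homotopy equivalence of \cast-algebras.

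I take $A_n = \Co$ for $n \in \mathbb{N}$, so that $\prod_n A_n = \ell^\infty(\mathbb{N}) \cong C(\beta\mathbb{N})$, and use $K_1$ (which is homotopy invariant) to distinguish the two sides. By Bott periodicity $K_1(\Omega B) \cong K_0(B)$ naturally in $B$, so
\begin{equation*}
K_1\bigl(\Omega\ell^\infty(\mathbb{N})\bigr) \;\cong\; K_0\bigl(\ell^\infty(\mathbb{N})\bigr) \;\cong\; C(\beta\mathbb{N}, \Z) \;\cong\; \ell^\infty(\mathbb{N}, \Z),
\end{equation*}
the group of \emph{bounded} $\Z$-valued sequences (projections in matrix algebras over $\ell^\infty(\mathbb{N})$ have uniformly bounded rank).

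On the other side, I claim the natural map $K_1(\prod_n C_0(\mathbb{R})) \to \prod_n K_1(C_0(\mathbb{R})) = \prod_n \Z$ is surjective. Given an arbitrary sequence $(m_n) \in \prod_n \Z$, the functions $z \mapsto z^{m_n}$ in $C(S^1) \cong C_0(\mathbb{R})^+$ satisfy $\|z^{m_n} - 1\| \leq 2$ uniformly in $n$, so $u := 1 + ((z^{m_n} - 1))_n$ lies in $(\prod_n C_0(\mathbb{R}))^+$; a direct computation shows $uu^* = u^*u = 1$, and by construction its image in $\prod_n \Z$ is $(m_n)$. By naturality of Bott periodicity with respect to the coordinate characters $\ell^\infty(\mathbb{N}) \to \Co$, the composite
\begin{equation*}
K_1\bigl(\Omega\ell^\infty(\mathbb{N})\bigr) \longrightarrow K_1\Bigl(\prod_n C_0(\mathbb{R})\Bigr) \longrightarrow \prod_n \Z
\end{equation*}
is identified with the evident inclusion $\ell^\infty(\mathbb{N}, \Z) \hookrightarrow \prod_n \Z$, which is not surjective (any unbounded integer sequence is missed). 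Since the second arrow is surjective, the first is not, so the comparison map is not a $K_1$-isomorphism, hence not a homotopy equivalence.

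The main step requiring verification is the explicit construction of unitaries witnessing surjectivity of $K_1(\prod_n C_0(\mathbb{R})) \twoheadrightarrow \prod_n \Z$; the key feature is the uniform bound $\|z^{m_n} - 1\| \leq 2$, which allows arbitrary (unbounded) sequences to be realised on the right while the left can only see bounded sequences. The remaining ingredients---Bott periodicity and its naturality---are routine. Since the canonical comparison is not an isomorphism in $\Ho\Aalg$, the loop-functor $\Omega$ does not preserve this infinite product and therefore cannot admit a left adjoint.
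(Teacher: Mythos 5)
Your proof is correct, but it takes a genuinely different route from the paper's. The paper passes through Gelfand--Naimark duality and shows that $\Sigma = S^{1}\wedge -$ on $\Ho{\CM_{\pt}}$ has no right adjoint by a representability argument: a representing object $Y$ for $X \mapsto \MorHo{\CM_{\pt}}{\Sigma X, S^{1}}$ would, by comparison with the genuine adjoint $\Omega S^{1}$ in $\Ho{\Top_{\pt}}$ and Yoneda, have to admit a map $Y \to \Omega S^{1}$ surjective on $\pi_{0}\cong\Z$, which is impossible for compact $Y$. You instead stay entirely on the \cast-side and show that $\Omega$ fails to preserve the countable product $\prod_{n}\Co = \ell^{\infty}(\mathbb{N})$, detecting the failure with $K_{1}$: Bott periodicity identifies $K_{1}(\Omega\ell^{\infty}(\mathbb{N}))$ with the \emph{bounded} integer sequences, while your explicit uniformly bounded unitaries show $K_{1}(\prod_{n}C_{0}(\mathbb{R}))$ surjects onto \emph{all} integer sequences; the naturality step identifying the composite with the inclusion of bounded sequences is the one place that deserves the care you give it, and it checks out. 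The two arguments ultimately exploit the same obstruction --- compactness forces boundedness, so a compact object cannot see $\Z$-many components at once --- but package it differently. The paper's version is more elementary (only $\pi_{0}$, no $K$-theory) and localizes the problem at a single space $S^{1}$; yours has the added value of actually proving the assertion of Remark~\ref{rem prod coprod}(\ref{item loop inf-prod}) that $\Omega$ does not preserve infinite products, which the paper states but for which the appendix only supplies the weaker non-adjointability consequence by other means. Note also that your argument, run in $\Ho\Calg$, directly yields Corollary~\ref{cor no adjoint omega C} without the reduction to the abelian subcategory.
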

\begin{proof}
By Gelfand-Naimark duality, the category $\CM_{\pt}$ of pointed compact Hausdorff spaces is contravariantly equivalent to $\Aalg$, hence form a category of {\em cofibrant} objects. We need to show that the functor
	\begin{equation}\label{eq sigma}
	\Sigma = S^{1} \wedge - \colon \Ho{\CM_{\pt}} \to \Ho{\CM_{\pt}}
	\end{equation}
does not admit a {\em right}-adjoint. We show that, in fact, the functor 
	\begin{equation}\label{eq nonrep functor}
	\Ho{\CM_{\pt}} \to \cat{Set}_{\pt},\quad X \mapsto \MorHo{\CM_{\pt}}{\Sigma X, S^{1}}	
	\end{equation}
is {\em not} representable, where $\cat{Set}_{\pt}$ denote the category of pointed sets. Indeed, suppose that for some $Y \inn \Ho{\CM_{\pt}}$ we have a natural identification
	\begin{equation}
	\MorHo{\CM_{\pt}}{\Sigma X, S^{1}} \cong \MorHo{\CM_{\pt}}{X, Y}.
	\end{equation}
Let $\Top_{\pt}$ denote the category of pointed compactly generated weakly Hausdorff topological spaces. 
Then $\CM_{\pt}$ is a full (reflective) subcategory of $\Top_{\pt}$ and  $\Ho{\CM_{\pt}}$ is a full subcategory of $\Ho{\Top_{\pt}}$. Moreover, the functor $\Sigma$ of (\ref{eq sigma}) 
 is the restriction of 
 	\begin{equation}
	\Sigma = S^{1} \wedge - \colon \Ho{\Top_{\pt}} \to \Ho{\Top_{\pt}},
	\end{equation}
which does have a right-adjoint 
	\begin{equation}
	\Omega = \Mor{\Top_{*}}(S^{1}, - )\colon \Ho{\Top_{\pt}} \to \Ho{\Top_{\pt}}.  	
	\end{equation}
Hence for $X \inn \CM_{\pt}$, we have 
	\begin{align}
	\MorHo{\Top_{\pt}}{X, Y} &\cong \MorHo{\CM_{\pt}}{X, Y}\\
	 &\cong \MorHo{\CM_{\pt}}{\Sigma X, S^{1}}\\
	 &\cong \MorHo{\Top_{\pt}}{X, \Omega S^{1}}.
	\end{align}
Moreover, by Yoneda's Lemma, the natural identification above must be induced by a map $f\colon Y \to \Omega S^{1}$ of $\Top_{\pt}$. This is a contradiction, because, since $Y$ is compact $f$ cannot be surjective on $\pi_{0}$.
	
 
\end{proof}

\begin{cor}\label{cor no adjoint omega C} The loop-functor $\Omega\colon \Ho\Calg \to \Ho\Calg$ does not admit a left-adjoint. 
\end{cor}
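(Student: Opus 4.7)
The plan is to prove the corollary by reducing to Lemma~\ref{lem loop on A}: if $\Omega\colon \Ho\Calg \to \Ho\Calg$ admitted a left-adjoint, I would construct from it a left-adjoint to $\Omega\colon \Ho\Aalg \to \Ho\Aalg$, contradicting what has just been established. The reduction rests on two facts already in hand: abelianization gives an adjunction $(-)^{\mathrm{ab}} \dashv i$ between $\Ho\Aalg$ and $\Ho\Calg$ (Lemma~\ref{rem abelian algebra}, descended to homotopy categories), and the loop-functor on $\Ho\Aalg$ is the restriction of the loop-functor on $\Ho\Calg$ (the corollary immediately preceding Lemma~\ref{lem loop on A}).

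Concretely, suppose for contradiction that $\Omega\colon \Ho\Calg \to \Ho\Calg$ admits a left-adjoint $L$. For $X, B \inn \Ho\Aalg$, I chain together the three adjunctions/identifications:
\begin{align*}
\Mor_{\Ho\Aalg}(X, \Omega B) &\cong \Mor_{\Ho\Calg}(iX, i\Omega B)\\
 &\cong \Mor_{\Ho\Calg}(iX, \Omega(iB))\\
 &\cong \Mor_{\Ho\Calg}(L(iX), iB)\\
 &\cong \Mor_{\Ho\Aalg}((L(iX))^{\mathrm{ab}}, B),
\end{align*}
using respectively fully-faithfulness of $i$, compatibility of the two loop functors, the hypothetical adjunction $L \dashv \Omega$ on $\Ho\Calg$, and the adjunction $(-)^{\mathrm{ab}} \dashv i$. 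These isomorphisms are natural in $X$ and $B$, so $X \mapsto (L(iX))^{\mathrm{ab}}$ is a left-adjoint to $\Omega$ on $\Ho\Aalg$, contradicting Lemma~\ref{lem loop on A}.

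There is essentially no obstacle: the argument is a formal composition of adjunctions, and both of the structural inputs it requires have already been proved in the appendix. The only point requiring a moment of care is the second isomorphism above, namely that $i\Omega_{\Aalg}B \cong \Omega_{\Calg}iB$; but this is exactly the content of the corollary asserting that $\Omega\colon \Ho\Aalg \to \Ho\Aalg$ is the restriction of $\Omega\colon \Ho\Calg \to \Ho\Calg$. Combined with Lemma~\ref{lem omega adjoint}, this corollary then immediately yields Theorem~\ref{thm no Quillen}.
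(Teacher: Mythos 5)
Your proposal is correct and is essentially identical to the paper's own proof: the paper also assumes a left-adjoint $\Sigma$ of $\Omega$ on $\Ho\Calg$ and observes that $(-)^{\mathrm{ab}}\circ \Sigma \circ i$ is then a left-adjoint of $\Omega$ on $\Ho\Aalg$, contradicting Lemma~\ref{lem loop on A}. Your explicit chain of natural isomorphisms simply spells out the verification the paper leaves implicit.
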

\begin{proof} Suppose that $\Sigma\colon \Ho\Calg \to \Ho\Calg$ is a left-adjoint of $\Omega$. It follows that the composition 
	\begin{equation}
	(-)^{\mathrm{ab}}\circ \Sigma \circ i \colon \Ho\Aalg \to \Ho\Calg \to \Ho\Calg \to \Ho\Aalg	
	\end{equation}
is a left-adjoint of $\Omega \colon \Ho\Aalg \to \Ho\Aalg$, contradicting Lemma~\ref{lem loop on A}. 
\end{proof}

Now Theorem~\ref{thm no Quillen} follows from Lemma~\ref{lem omega adjoint} and Corollary~\ref{cor no adjoint omega C}.

\bibliographystyle{amsalpha}
\bibliography{../BibTeX/biblio}

\end{document}